\newlist{steps}{enumerate}{1}
\setlist[steps, 1]{label = Step \arabic*:}
\newtheorem{theorem}{Theorem}[section]
\newtheorem{lemma}[theorem]{Lemma}
\newtheorem{corollary}[theorem]{Corollary}
\newtheorem{conjecture}[theorem]{Conjecture}
\newtheorem{question}[theorem]{Question}
\theoremstyle{definition}
\newtheorem{definition}[theorem]{Definition}
\theoremstyle{remark}
\newcommand{\EE}{\mathbb{E}}
\newcommand{\PP}{\mathbb{P}}
\newcommand{\HH}{\mathbb{H}}
\newcommand{\FF}{\mathcal{F}}
\newcommand{\TT}{\mathcal{T}}
\newcommand{\Hom}{\mathrm{Hom}}
\newcommand{\xx}{\mathbf{x}}
\newcommand{\X}{\mathbf{X}}
\newcommand{\Y}{\mathbf{Y}}
\tikzstyle{p}+=[fill=black, circle, minimum width = 1pt, inner sep =
\tikzstyle{w}+=[fill=white, draw, circle, minimum width = 1pt, inner sep =
\begin{document}

\title{On some graph densities in locally dense graphs}

\author{Joonkyung Lee\thanks{Fachbereich Mathematik, Universit\"at Hamburg, Germany.
E-mail: {\tt
joonkyung.lee@uni-hamburg.de}. Research supported by ERC Consolidator Grant PEPCo 724903.}}

\date{}

\maketitle

\begin{abstract}
The Kohayakawa--Nagle--R\"odl--Schacht conjecture roughly states that every sufficiently large locally $d$-dense graph $G$ on $n$ vertices must contain at least $(1-o(1))d^{|E(H)|}n^{|V(H)|}$ copies of a fixed graph $H$.
Despite its important connections to both quasirandomness and Ramsey theory, there are very few examples known to satisfy the conjecture.

We provide various new classes of graphs that satisfy the conjecture.
Firstly, we prove that adding an edge to a cycle or a tree produces graphs that satisfy the conjecture.
Secondly, we prove that a class of graphs obtained by gluing complete multipartite graphs in a tree-like way satisfies
the conjecture.
We also prove an analogous result with odd cycles replacing complete multipartite graphs.
\end{abstract}


\section{Introduction}
A \emph{graph homomorphism} is a vertex map from a graph $H$ to another graph $G$ that preserves adjacency,
and the \emph{graph homomorphism density} $t_H(G)$ is the probability that a random vertex map from $H$ to $G$ is a graph homomorphism, i.e.,
\begin{align*}
 t_H(G):=\frac{|\Hom(H,G)|}{|V(G)|^{|V(H)|}}.
\end{align*}
Many statements in extremal graph theory can be rephrased as inequalities 
between certain graph homomorphism densities,
especially when $H$ is a fixed graph and the target graph $G$ is large.
For example, we say that, for a constant $0<d<1$, 
a graph sequence $G_n$ with $|V(G_n)|\rightarrow\infty$ and $t_{K_2}(G)\rightarrow d$ is quasirandom if and only if
\begin{align}\label{eq:counting}
	t_{H}(G_n)=(1\pm o(1))d^{|E(H)|},
\end{align}
for every fixed graph $H$, that is, the $H$-count is random-like in $G$.
A fundamental observation in the theory of quasirandom graphs, 
due to Thomason \cite{Thom87} and Chung, Graham, and Wilson \cite{CGW89}, 
states that $G_n$ is quasirandom if and only if
every subset $X\subseteq V(G_n)$ spans
\begin{align}\label{eq:density}
\frac{d}{2}|X|^2\pm o(|V(G_n)|^2)
\end{align}
edges. That is, we have a uniform edge density 
everywhere up to an error dominated by $|V(G_n)|^2$.

It is then natural to ask if 
some modifications of \eqref{eq:counting} or \eqref{eq:density} imply
variations of the other.
For instance, we say that a graph $G$ is $(\rho,d)$-dense if every vertex subset $X\subseteq V(G)$ of size at least 
$\rho |V(G)|$ contains at least $\frac{d}{2}|X|^2$ edges.
This is a weaker condition than \eqref{eq:density},
as we do not have an upper bound for the number of edges spanned by a vertex subset $X$.
Thus, we cannot fully recover \eqref{eq:counting}, but it is still plausible
that we can recover the lower bound
\begin{align*}
t_H(G_n)\geq (1-o(1))d^{|E(H)|}
\end{align*}
when $\rho$ is sufficiently small. This question was in fact formalised by Kohayakawa, Nagle, R\"odl, and Schacht~\cite{KNRS10}.
\begin{conjecture}[\cite{KNRS10}]\label{conj:KNRS}
	Let $H$ be a graph
	and let $0<d<1$ be fixed. 
	Then for every $\eta>0$, there exists $\rho=\rho(\eta,d,H)>0$ such that 
		\begin{align}\label{eq:KNRS}
			t_{H}(G)\geq (1-\eta) d^{|E(H)|}
		\end{align}			
	for every sufficiently large $(\rho,d)$-dense graph $G$.
\end{conjecture}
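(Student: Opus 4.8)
The plan is to reduce \eqref{eq:KNRS} to a homomorphism-density inequality for graphons under a ``local density'' hypothesis, and then to prove that inequality by a block-by-block counting scheme whose special cases form the core of this paper. \textbf{Step 1 (regularisation).} Apply Szemer\'edi's regularity lemma to a large $(\rho,d)$-dense graph $G$ with a regularity parameter $\varepsilon=\varepsilon(\eta,H)$ chosen smaller than any constant arising below, obtaining an $\varepsilon$-regular partition into $k=k(\varepsilon)$ clusters and the associated step graphon $W=W_G$. If $\rho=\rho(\eta,H)$ is small compared with $1/k$ --- legitimate, since $k$ depends only on $\eta$ and $H$ --- then $(\rho,d)$-density passes to $W$ up to an $o(1)$ error: a random-sparsification argument (put $v\in V_i$ into a test set independently with probability $s_i$ and apply the density hypothesis to a typical outcome) shows every measurable $S\subseteq[0,1]$ with $|S|\ge 2\rho$ satisfies $\int_{S\times S}W\ge d|S|^2-o(1)$. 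Since $t_H(G)=t_H(W)\pm o(1)$ by the counting lemma, it suffices to prove: for every $\eta>0$ there is $\rho>0$ so that any symmetric measurable $W\colon[0,1]^2\to[0,1]$ with $\int_{S\times S}W\ge d|S|^2$ for all $|S|\ge\rho$ has $t_H(W)\ge d^{|E(H)|}-\eta$.

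\textbf{Step 2 (the target graphon inequality).} The constant graphon $W\equiv d$ satisfies $t_H(W)=d^{|E(H)|}$, so \eqref{eq:KNRS} asserts this is almost optimal under local density alone. For the main line I would attack the displayed graphon inequality directly: choose a tree-like decomposition of $H$ into its $2$-connected blocks and build a near-maximum-entropy probability measure on $\Hom(H,W)$ by embedding one block at a time along the decomposition tree, starting from a root block. The information-theoretic method of this paper controls each conditional step: for a typical image of the boundary vertex of the current block, the local density hypothesis forces the ``available'' set for the next block to have measure bounded below, which is precisely what prevents the entropy --- hence $t_H(W)$ --- from dropping below $d^{|E(H)|}-\eta$. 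A complementary viewpoint is to fix $\rho$, use weak-$*$ compactness of the set of $(\rho,d)$-dense graphons to pass to a minimiser $W^*$ of $t_H$, and show by a first-variation analysis that on a suitable part of its support the edge-derivative $(a,b)\mapsto\sum_{e\in E(H)}t_{H-e}(W^*;a,b)$ is essentially constant; combined block-by-block with Jensen and Cauchy--Schwarz, this flatness should propagate the global value $\int W^*=d$ into the full count.

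\textbf{Step 3 (blocks).} Each $2$-connected block of $H$ must now be counted under only local density. When a block is complete multipartite or an odd cycle --- the cases this paper resolves --- the information-theoretic estimate above is available and the induction along the decomposition tree closes. For a general $H$ I would run the same scheme relative to a tree decomposition of bounded treewidth, treating each bag as one ``super-block'' and gluing along the separators, so that everything again reduces to counting copies of the bag-graphs under the local density hypothesis.

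\textbf{The main obstacle.} The crux, and the reason \eqref{eq:KNRS} remains only a conjecture, is Step 3 for blocks that are neither complete multipartite nor odd cycles: for such graphs one does not know how to count homomorphisms under only local density, because conditioning on a partial embedding can concentrate the distribution of the remaining vertices on a set of measure below $\rho$, exactly where the guarantee $\int_{S\times S}W\ge d|S|^2$ says nothing. Overcoming this requires either a one-sided, quantitative Sidorenko-type lower bound that survives the weak local-density hypothesis, or a new averaging idea robust to such concentration, and no such tool is presently available in full generality. The honest expectation is therefore that the reduction in Step 1 and the organisational scheme of Step 2 are routine, that the block inputs needed in Step 3 for complete-multipartite and odd-cycle blocks are exactly what this paper supplies, and that supplying such inputs for arbitrary blocks is the open problem.
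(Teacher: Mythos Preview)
The statement is a \emph{conjecture}; the paper does not prove it in general, and you correctly conclude by naming the obstruction. So your proposal is honest, but as a roadmap it diverges from what the paper actually does for the cases it settles, and it carries a gap of its own that you do not flag.

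First, the paper never passes through Szemer\'edi regularity or graphons. For the special cases it resolves ($K(r_1,\dots,r_\ell)$-decomposable and $C_{2r+1}$-decomposable $H$), it works directly with a $(\rho,d)$-dense graph: Theorem~\ref{thm:tree_hom} gives $t_H(G)\ge t_J(G)^{|\FF|}/\prod_{XY\in E(\TT)}t_{H[X\cap Y]}(G)$, and the substance is then a comparison of $t_J(G)$ against $t_{H[X\cap Y]}(G)$ in locally dense $G$ (Lemma~\ref{lem:multi}/Corollary~\ref{cor:KrKs} for multipartite pieces, Lemma~\ref{lem:cyclepath} for odd cycles). Your Step~1 regularisation is not wrong, but it is extra machinery that buys nothing here, and your ``complementary'' variational/compactness sketch in Step~2 is speculative and plays no role in the paper.

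Second, your Step~2 reduction to $2$-connected blocks is itself not known to work. You propose to embed blocks one at a time along a block-cut tree with a ``near-maximum-entropy'' measure, but Theorem~\ref{thm:tree_entropy} requires the marginal distributions on the shared vertex (or separator) to \emph{agree} across the pieces, and for non-isomorphic blocks there is no mechanism to arrange this. The paper's $J$-decomposition hypothesis is precisely tailored to force matching marginals (all pieces are isomorphic to $J$ and glued by isomorphisms fixing the overlap), and the concluding remarks explicitly identify marginal-matching as the bottleneck to going further. So even if every $2$-connected block of $H$ individually satisfied Conjecture~\ref{conj:KNRS}, it is not presently known that $H$ does; your reduction assumes away this difficulty.

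In short: you have correctly located the headline obstacle (counting a general block under only local density), but you have also implicitly assumed a block-gluing step that is open, and the concrete progress in the paper proceeds by a more direct entropy-plus-comparison argument rather than via regularity or variational methods.
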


This conjecture is not an arbitrary variant of graph quasirandomness, 
but has natural applications to Ramsey theory.
Indeed, the notion of $(\rho,d)$-dense graphs already appears in a paper of Graham, R\"{o}dl, and Rucinski \cite{GRR01},
where they use it to bound the Ramsey number of sparse graphs.
Roughly speaking, given a 2-edge-colouring of a complete graph,
one colour is very dense on some vertex 
subset or the other colour is somewhat dense on all vertex subsets.
In the first case, it is usually simple to embed a graph $H$, while in the second case,
the problem reduces to an embedding problem in locally dense graphs.
It is therefore of significant interest to understand when we can embed a graph $H$
in a locally dense graph and how many copies we obtain. 

\medskip

Conjecture \ref{conj:KNRS} is also closely related to another beautiful conjecture of Sidorenko \cite{Sid92} and Erd\H{o}s--Simonovits \cite{ESi83}.
\begin{conjecture}[Sidorenko's conjecture~\cite{ESi83,Sid92}]\label{conj:Sido}
	Let $H$ be a bipartite graph
	and let $G$ be a graph.
	Then 
		\begin{align}\label{eq:Sido}
			t_{H}(G)\geq t_{K_2}(G)^{|E(H)| }.
		\end{align}			
\end{conjecture}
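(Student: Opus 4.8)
The plan is to attack \eqref{eq:Sido} by the information-theoretic (entropy) method, which converts the inequality into the task of building a random homomorphism from $H$ with large entropy. Since every graph $G$ gives rise to a step graphon $W_G$ with $t_F(G)=t_F(W_G)$ for all graphs $F$, it suffices to prove $t_H(W)\ge p^{|E(H)|}$ for every symmetric measurable $W\colon[0,1]^2\to[0,1]$, where $p:=t_{K_2}(W)=\int_{[0,1]^2}W>0$ and $t_H(W)=\int_{[0,1]^{V(H)}}\prod_{uv\in E(H)}W(x_u,x_v)\,d\bx$. The starting point is that for any probability density $q$ on $[0,1]^{V(H)}$ supported where $\prod_{uv}W(x_u,x_v)>0$, writing $(X_v)_{v\in V(H)}\sim q$ and applying Jensen's inequality to $\log$ yields
\begin{align}\label{eq:entropy}
 \log t_H(W)\;\ge\;\sum_{uv\in E(H)}\E\sqb{\log W(X_u,X_v)}\;+\;h(q),
\end{align}
with $h(q)=-\int q\log q$ the differential entropy. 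It therefore suffices to exhibit a coupling $q$ of $[0,1]$-valued variables $(X_v)_{v\in V(H)}$ for which the right-hand side of \eqref{eq:entropy} is at least $|E(H)|\log p$.

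When $H$ is a tree this is classical: root $H$, give the root the degree-biased law $d_W(x)/p$ with $d_W(x)=\int W(x,y)\,dy$, and sample each child from the conditional edge law $W(\cdot,x_{\mathrm{parent}})/d_W(x_{\mathrm{parent}})$, so that every edge marginal of $q$ equals $W/p$. The tree entropy identity $h(q)=\sum_{uv\in E(H)}h(q_{uv})-\sum_v(\deg_H v-1)h(q_v)$, the edge-measure identity $\E_{W/p}[\log W]+h(W/p)=\log p$, and the bound $\int d_W\log d_W\ge p\log p$ (Jensen for the convex $t\mapsto t\log t$, using $\int d_W=p$) combine to make the right-hand side of \eqref{eq:entropy} at least $|E(H)|\log p$, in fact with room to spare. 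For a general bipartite $H$ the plan is to fix a tree decomposition $(T,\{B_t\})$ of $H$ into small bags, construct on each bag $B_t$ a local coupling $q^{(t)}$ of $(X_v)_{v\in B_t}$, and glue the $q^{(t)}$ along $T$ by conditional independence given the separator variables $B_t\cap B_{t'}$, exactly as in the tree construction. For such a glued $q$ the entropy telescopes into $\sum_t h(q^{(t)})$ minus the separator entropies, so \eqref{eq:entropy} reduces to a per-bag statement: for each bag, $\sum_{uv\in E(B_t)}\E[\log W(X_u,X_v)]+h(q^{(t)})$ must come within the separator budget of $|E(B_t)|\log p$. Bipartiteness of $H$, hence of each bag, is what rules out a parity obstruction and keeps alive the hope of choosing each $q^{(t)}$ close to having all edge marginals $W/p$.

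The main obstacle is exactly the per-bag inequality when a bag contains a cycle. If $B_t$ is a forest the tree construction gives it with room to spare; if $B_t$ is a single complete bipartite graph or an even cycle, a Cauchy--Schwarz/H\"{o}lder estimate does it; but for an arbitrary $2$-connected bipartite bag there need be no coupling with all edge marginals equal to $W/p$, the local maximum-entropy distribution can fall short of $|E(B_t)|\log p$ by a genuine deficit $\delta_t>0$, and it is not clear that the separator corrections absorb $\sum_t\delta_t$. This is precisely where Conjecture~\ref{conj:Sido} remains open: closing it in full seems to require either a new, non-Cauchy--Schwarz certificate of the entropy needed at an arbitrary $2$-connected block, or a structural decomposition of $W$ itself near such blocks --- and the tensor power trick is of no help here, since \eqref{eq:Sido} is already tight for $H=K_2$. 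I expect the bulk of any proof to lie in bounding $\delta_t$ block-by-block; for blocks of the ``tree-like glued'' type considered in this paper the local extremal coupling is again a Markov field, so $\delta_t$ can be computed exactly and shown to fit the budget, and the remaining work is to carry out that bookkeeping and make it uniform in $W$.
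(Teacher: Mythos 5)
The statement you were asked about is Sidorenko's conjecture, which the paper does not prove and which remains wide open; the paper only records it (Conjecture~\ref{conj:Sido}) and uses known special cases. Your proposal is therefore not, and by your own admission does not claim to be, a proof: after reducing to graphons and writing the standard entropy/Jensen lower bound for $\log t_H(W)$, you handle the tree case (classical, and equivalent to the usual conditional-independence construction behind Theorem~\ref{thm:tree_entropy} of this paper) and then propose to glue per-bag couplings along a tree decomposition. The genuine gap is exactly the step you flag yourself: for a $2$-connected bipartite bag there is in general no coupling of $(X_v)_{v\in B_t}$ whose edge marginals are all $W/p$, the maximum-entropy local distribution can fall short of the target $|E(B_t)|\log p$, and no known argument shows the separator corrections absorb these deficits. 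This is not a bookkeeping issue that can be ``made uniform in $W$''; it is the open core of the conjecture, and the entropy method as you set it up only succeeds for graphs admitting tree-like decompositions into pieces that themselves satisfy strong extra properties (trees, even cycles, complete bipartite graphs, weakly norming graphs, as in the cited works of Li--Szegedy, Szegedy, Conlon--Kim--Lee--Lee, and Conlon--Lee). So your text is a reasonable survey of a known proof strategy and its limitation, but it does not establish \eqref{eq:Sido}, and no proof of it exists to compare against in this paper.

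One smaller caution: even for bags that individually have Sidorenko's property (e.g.\ even cycles or complete bipartite graphs), having each bag satisfy its own inequality is not sufficient for the glued bound, because the gluing in Theorem~\ref{thm:tree_hom} requires the local distributions on adjacent bags to agree on the separator marginals; this marginal-consistency problem is exactly the ``caveat'' discussed in the paper's concluding remarks, and it is a second obstruction, independent of the per-bag entropy deficit you identify.
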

We say that a bipartite graph has \emph{Sidorenko's property} if and only if \eqref{eq:Sido} holds for all graphs $G$.
There are a number of graphs known to have Sidorenko's property \cite{CFS10, CKLL15,CL16,CL18,H10,KLL14,LSz12,Sid93,Sid92,Sz15}, but the conjecture still remains open.

\medskip

In this paper, we focus on Conjecture \ref{conj:KNRS}.
It is straightforward to see that
Conjecture \ref{conj:KNRS} is true for 
$H$ having Sidorenko's property with $\rho=1$ and $d=t_{K_2}(G)$.
For example, an even cycle or a tree has Sidorenko's property, as was firstly proven by Sidorenko~\cite{Sid93}, and hence satisfies Conjecture~\ref{conj:KNRS}. 
As a partial converse, it is shown in \cite{CKLL15} that the 1-subdivision of every $H$ that satisfies Conjecture~\ref{conj:KNRS} has Sidorenko's property.

On the other hand, there are only very few non-bipartite graphs known to satisfy the conjecture. It is folklore that 
the complete $\ell$-partite graph 
$K(r_1,r_2,\cdots,r_\ell)$ on $r=r_1+\cdots+r_\ell$ vertices
satisfies the conjecture,\footnote{
We shall give a simple proof for the case $H=K_r$ in Theorem \ref{thm:folklore}.}
and Reiher \cite{Re14} settled the case where $H$ is an odd cycle.
As a consequence, every cycle satisfies Conjecture~\ref{conj:KNRS}.

Provided that trees and cycles do satisfy Conjecture~\ref{conj:KNRS}, it is then natural to ask if we may add an edge to those examples. 
Our first main theorem states that this is indeed true.
We say that a graph is \emph{unicyclic} if it contains exactly one cycle as a subgraph. A \emph{chord} of a cycle is a pair of vertices that is not an edge of the cycle.
\begin{theorem}\label{thm:chord}
Conjecture~\ref{conj:KNRS} is true if $H$ is a unicyclic graph or a cycle plus a chord.
\end{theorem}

\medskip

The second example class is obtained by gluing complete multipartite graphs or odd cycles in a tree-like way, which we shall define precisely in Definition~\ref{def:Jdec}. To illustrate roughly at the moment, a \emph{$J$-decomposable} graph $H$ is a graph obtained by gluing vertex-disjoint copies of $J$ in a tree-like way. For example, the Goldner--Harary graph in Figure~\ref{fig:GH} is obtained by gluing six copies of $K_4$ in the illustrated tree-like way. 

In fact, some $J$-decomposable graphs have already been studied in different contexts. In particular, the $K_r$-decomposable graphs have played vital role as maximal graphs with a given tree-width, a graph parameter studied extensively since Robertson and Seymour's work~\cite{RS84}.

\begin{figure}
	\begin{center}
	\begin{tikzpicture}
\begin{scope}[every node/.style={circle,thick,draw}]
    \node (A) at (0,0) {A};
    \node (B) at (2,0) {B};
    \node (C) at (6,2.2) {C};
    \node (D) at (6,-2.2) {D};
    \node (E) at (6,0) {E};
    \node (F) at (5,1) {F};
    \node (G) at (5,-1) {G};
    \node (H) at (10,0) {H};
    \node (I) at (7,1) {I};
    \node (J) at (7,-1) {J};
    \node (K) at (12,0) {K};
\end{scope}

\begin{scope}[>={Stealth[black]},
              every node/.style={fill=white,circle},
              every edge/.style={draw=black, thick}]
    \path [-] (A) edge (B);
    \path [-] (A) edge (C);
    \path [-] (A) edge (D);
    \path [-] (B) edge (C);
    \path [-] (B) edge (D);
    \path [-] (B) edge (E);
    \path [-] (B) edge (F);
    \path [-] (B) edge (G);
    \path [-] (C) edge[bend right=270] (D);
    \path [-] (C) edge (E);
    \path [-] (C) edge (F);
    \path [-] (C) edge (I);
    \path [-] (C) edge (H);
    \path [-] (C) edge (K);
    \path [-] (D) edge (E);
    \path [-] (D) edge (G);
    \path [-] (D) edge (J);
    \path [-] (D) edge (H);
    \path [-] (D) edge (K);
    \path [-] (E) edge (F);
    \path [-] (E) edge (G);
    \path [-] (E) edge (H);
    \path [-] (E) edge (I);
    \path [-] (E) edge (J);
    \path [-] (H) edge (I);
    \path [-] (H) edge (J);
    \path [-] (H) edge (K);
\end{scope}
\end{tikzpicture}
\begin{tikzpicture}
\begin{scope}[every node/.style={circle,thick,draw}]
	\node (abcd) at (-1,0) {ABCD};
	\node (bcef) at (0,2) {BCEF};
	\node (bdeg) at (0,-2) {BDEG};
	\node (bcde) at (2,0) {BCDE};
	\node (cdeh) at (5,0) {CDEH};
	\node (cdhk) at (8,0) {CDHK};
	\node (cehi) at (7,2) {CEHI};
	\node (dehj) at (7,-2) {DEHJ};
\end{scope}
\begin{scope}[>={Stealth[black]},
              every node/.style={fill=white,circle},
              every edge/.style={draw=black, thick}]
    \path [-] (abcd) edge (bcde);
    \path [-] (bcef) edge (bcde);
    \path [-] (bdeg) edge (bcde);
    \path [-] (cdeh) edge (bcde);
    \path [-] (cdhk) edge (cdeh);
    \path [-] (cehi) edge (cdeh);
    \path [-] (dehj) edge (cdeh);
\end{scope}         
        
\end{tikzpicture}
\end{center}
\caption{The Goldner--Harary graph and its $K_4$-decomposition.}\label{fig:GH}
\end{figure}

\medskip

One may expect that the standard application of Cauchy--Schwarz inequality will prove that the graph $H$ obtained by a pairwise (symmetric) gluing of two copies of $J$ satisfies Conjecture~\ref{conj:KNRS}, once $J$ does.
Our result roughly states that it is possible to extend this standard Cauchy--Schwarz argument in a tree-like way, using the information theoretic approach developed in~\cite{CKLL15,CL16}. More details will be discussed in Section~\ref{sec:prelim} and we shall prove the following theorem in Section~\ref{sec:multipart} and~\ref{sec:oddcycles}.
\begin{theorem}\label{thm:treelike}
Conjecture \ref{conj:KNRS} is true if $H$ is $C_{2k+1}$-decomposable or $K(r_1,\cdots,r_\ell)$-decomposable.
\end{theorem}
We remark that Conlon and the author~\cite{CL16} proved that every $J$-decomposable graph has Sidorenko's property whenever $J$ is weakly norming. Since every even cycle is weakly norming, $C_{2k}$-decomposable graphs are already known to satisfy Conjecture~\ref{conj:KNRS}. Thus, Theorem~\ref{thm:treelike} together with the result in~\cite{CL16} implies that every $C_k$-decomposable graph satisfies Conjecture~\ref{conj:KNRS}.
Moreover, it is easy to check that Theorem~\ref{thm:chord} and~\ref{thm:treelike} are enough to complete the proof of Conjecture~\ref{conj:KNRS} for graphs with at most five vertices. 

\medskip

The proofs of Theorem~\ref{thm:chord} and~\ref{thm:treelike} use recently developed techniques to attack Sidorenko's conjecture in non-trivial ways. In particular, the information theoretic approach~\cite{CKLL15,CL16,KLL14,LSz12,Sz15} and the application of H\"older's inequality appeared in the very recent work~\cite{CL18} are the key ingredients.

\section{Preliminaries}\label{sec:prelim}

In what follows, let $|H|:=|V(H)|$, $e(H):=|E(H)|$, and $e_H[U]:=|E(H[U])|$ for $U\subseteq V(H)$. If $H$ is clear from the context, then we shall also write $e[U]=e_H[U]$.
Logarithms will always be understood to be base~2.
We denote by $P_\ell$ the $\ell$-edge path, to emphasise the number of edges and its parity.

The proof of Theorem \ref{thm:treelike} relies on the entropy analysis
applied in \cite{CKLL15,CL16}
and the proof of Theorem~\ref{thm:chord} also uses basic entropy inequalities, despite stated in terms of Jensen's inequality for logarithmic functions.
We state the following facts about entropy without proofs and refer the reader to \cite{AS08} for
more detailed information on entropy and conditional entropy.
 
\begin{lemma}\label{lem:entropy}
Let $X$, $Y$, and $Z$ be random variables and suppose that $X$ takes values in 
a set $S$, $\HH(X)$ is the entropy of $X$, and $\HH(X|Y)$ is the conditional entropy of $X$ given $Y$. Then 
\begin{enumerate}
	\item $\HH(X)\leq\log|S|$,
	\item $\HH(X|Y,Z)=\HH(X|Z)$ if $X$ and $Y$ are conditionally independent given $Z$.
\end{enumerate}
\end{lemma}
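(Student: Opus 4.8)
The plan is to prove both parts directly from the definitions, as these are standard facts whose only subtlety lies in the correct handling of outcomes of zero probability.

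For part (1), write $\HH(X)=\sum_{s\in S'}p(s)\log\frac{1}{p(s)}$, where $p(s)=\PP(X=s)$, where $S'=\set{s\in S:p(s)>0}$, and where we adopt the usual convention $0\log 0=0$. Since $\log$ is concave, Jensen's inequality applied to the probability weights $\set{p(s)}_{s\in S'}$ yields
\[
\sum_{s\in S'}p(s)\log\frac{1}{p(s)}\leq\log\Big(\sum_{s\in S'}p(s)\cdot\frac{1}{p(s)}\Big)=\log|S'|\leq\log|S|.
\]
Equivalently, one may note that $\log|S|-\HH(X)=\sum_{s\in S'}p(s)\log\big(|S|\,p(s)\big)$ is the relative entropy of the law of $X$ with respect to the uniform distribution on $S$, hence non-negative by Gibbs' inequality. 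Either route gives the claim.

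For part (2), recall that $\HH(X\mid Y,Z)=\sum_{(y,z)}\PP(Y=y,Z=z)\,\HH(X\mid Y=y,Z=z)$, where the sum runs over pairs $(y,z)$ with positive probability and $\HH(X\mid Y=y,Z=z)$ denotes the entropy of the conditional law of $X$ given $\set{Y=y,Z=z}$. The hypothesis that $X$ and $Y$ are conditionally independent given $Z$ means precisely that $\PP(X=x\mid Y=y,Z=z)=\PP(X=x\mid Z=z)$ for every $x$ and every $(y,z)$ with $\PP(Y=y,Z=z)>0$. Substituting this identity into the definition of $\HH(X\mid Y=y,Z=z)$ shows it equals $\HH(X\mid Z=z)$, which does not depend on $y$. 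Hence
\[
\HH(X\mid Y,Z)=\sum_{z}\;\sum_{y}\PP(Y=y,Z=z)\,\HH(X\mid Z=z)=\sum_{z}\PP(Z=z)\,\HH(X\mid Z=z)=\HH(X\mid Z),
\]
which is the desired conclusion.

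There is no genuine obstacle here; the only point requiring care is the consistent use of the convention $0\log 0=0$ and the restriction of every sum to outcomes of positive probability, so that all conditional laws appearing in the argument are well defined. One could alternatively deduce (2) from the identity $\HH(X\mid Z)-\HH(X\mid Y,Z)=I(X;Y\mid Z)$ together with the non-negativity of conditional mutual information, but since that inequality is itself proved by the same substitution, the direct computation above is the cleanest presentation.
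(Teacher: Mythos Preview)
Your proof is correct; both parts are handled by the standard arguments (Jensen for the first, direct substitution of the conditional-independence identity for the second), with appropriate care about zero-probability outcomes. The paper itself does not prove this lemma at all but simply states it as a known fact, referring the reader to \cite{AS08} and \cite{CKLL15}, so there is no in-paper argument to compare against; your write-up would serve perfectly well as the omitted proof.
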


To formalise the definition $J$-decomposable graphs, 
it is convenient to use the notion of tree decompositions, introduced by Halin~\cite{Hal76}
and developed by Robertson and Seymour \cite{RS84}.
\begin{definition}
A \emph{tree decomposition} of a graph $H$ is a pair $(\mathcal{F}, \TT)$ consisting of a family $\mathcal{F}$  of vertex subsets of $H$ and  a tree $\TT$ on $\mathcal{F}$ such that
\begin{enumerate}
\item $\bigcup_{X\in\mathcal{F}}X=V(H)$, 
\item for each $e \in E(H)$, there exists a set $X \in \mathcal{F}$ such that
$X$ contains $e$, and
\item for $X,Y,Z\in \mathcal{F}$, $X\cap Y\subseteq Z$ 
whenever $Z$ lies on the path from $X$ to $Y$ in $\TT$.
\end{enumerate}
\end{definition}

\begin{definition}\label{def:Jdec}
Given a graph $H$ and an induced subgraph $J$,
a \emph{$J$-decomposition} of a graph $H$ is a tree decomposition $(\mathcal{F},\TT)$ of $H$ 
satisfying the following two extra conditions:
\begin{enumerate}
\item 
each induced subgraphs $H[X]$, $X \in \FF$, is isomorphic to $J$, and
\item 
for every pair $X,Y\in \FF$ which are adjacent in $\TT$, there is an isomorphism between the two copies $H[X]$ and $H[Y]$
of $J$ that fixes $X \cap Y$.
\end{enumerate}
We call a graph \emph{$J$-decomposable} if it allows a $J$-decomposition,
i.e., it can be obtained by symmetrically gluing copies of $J$ in a tree-like way.
If $J$ is a complete graph then the second condition on the symmetry between $H[X]$ and $H[Y]$, $XY\in E(\TT)$,
is automatically satisfied.
\end{definition}

We shall use the following simple lemma to count the number of edges in a $J$-decomposable graph. The proof will be given in the appendix.
\begin{lemma}\label{lem:edge_counts}
Let $(\FF,\TT)$ be a tree decomposition of a graph $H$. Then
\begin{align}\label{eq:edge_counts}
e(H)=\sum_{X\in\FF}e_H[X]-\sum_{XY\in E(\TT)}e_H[X\cap Y].
\end{align}
\end{lemma}

A folklore lemma, 
essentially implied by the Kolmogorov extension theorem, 
will be necessary to obtain an information-theoretic lemma that counts the number of $J$-decomposable graphs.
We give a simple proof when all the random variables take finitely many values,
which suffices for our purpose, in the appendix.
For a modern introduction to 
product measure spaces and the Kolmogorov extension theorem,
we refer to \cite{tao11}.

\begin{lemma}\label{lem:kolmogorov}
Let $(X_1,X_2)$ and $(X_2',X_3)$ be random vectors taking values in a finite set.
If $X_2$ and $X_2'$ are identically distributed, then there exists $(Y_1,Y_2,Y_3)$ 
such that
$Y_1$ and $Y_3$ are conditionally independent given $Y_2$,
$(X_1,X_2)$ and $(Y_1,Y_2)$ are identically distributed,
and $(X_2',X_3)$ and $(Y_2,Y_3)$ are also identically distributed.
\end{lemma}

Let $\FF$ be a family of subsets of $[k]:=\{1,2,\cdots,k\}$.
Partly motivated by the notion of tree decompositions,
a \emph{Markov tree} on $[k]$ is a pair $(\FF,\TT)$ with $\TT$ a tree on vertex set $\FF$ that satisfies
\begin{enumerate}
	\item $\bigcup_{F\in\mathcal{F}}F=[k]$ and
	\item for $A,B,C\in \mathcal{F}$, $A\cap B\subseteq C$ 
whenever $C$ lies on the path from $A$ to $B$ in $\TT$.
\end{enumerate}
Let $V$ be a finite set and for each $F\in\FF$ let $\X_F=(X_{i;F})_{i\in F}$ be a random vector 
	taking values in $V^{F}$.
We are interested in such random vectors where `local' information is `globally' extendible.
That is, there exist random variables $Y_1,Y_2,\cdots,Y_k$ such that, for each $F\in \FF$, the two random vectors $(Y_i)_{i\in F}$ and $\X_{F}$ are identically distributed over $V^{F}$, 
and thus, $(Y_i)_{i\in F}$ `copies' the given local information $\X_F$.
If such $Y_1,\cdots,Y_k$ exist, 
then $(X_{i;A})_{i\in A\cap B}$ and $(X_{j;B})_{j\in A\cap B}$ must be identically distributed.
The following theorem states that the converse
is also true and, moreover,
the maximum entropy under such constraints
can always be attained. We again postpone the proof until the appendix.

\begin{theorem}\label{thm:tree_entropy}
	Let $(\FF,\TT)$ be a Markov tree on $[k]$.
	Let $V$ be a finite set and for each $F\in\FF$ let $\X_F=(X_{i;F})_{i\in F}$ be a random vector 
	taking values in $V^F$.  
	If $(X_{i;A})_{i\in A\cap B}$ and $(X_{j;B})_{j\in A\cap B}$ are identically distributed
	whenever $AB\in E(\TT)$,
	then there exists $\Y=(Y_1,\cdots,Y_k)$
	with entropy
	\begin{align}\label{eq:tree_entropy}
		\HH(\Y)
		=\sum_{F\in\FF}\HH(\X_F)
		-\sum_{AB\in E(\TT)}\HH((X_{i;A})_{i\in A\cap B})
	\end{align}
	such that $(Y_i)_{i\in F}$ and $\X_F$ are identically distributed over $V^{F}$ for all $F\in\FF$.
\end{theorem}  

Let us discuss how Theorem \ref{thm:tree_entropy}
relates to the classical Cauchy--Schwarz inequality.
For a simple example, let $V=V(G)$ be the vertex set of a graph $G$, 
and let $(X,Y)$ and $(Y',Z)$ be two uniform random labelled edges.
Since the distributions of $Y$ and $Y'$ are identical, Theorem \ref{thm:tree_entropy} implies
that there exists $(X_1,X_2,X_3)$ with entropy
\begin{align*}
\HH(X_1,X_2,X_3)=\HH(X,Y)+\HH(Y',Z)-\HH(Y).
\end{align*}
As $(X_1,X_2)$ and $(X_2,X_3)$ are identically distributed with $(X,Y)$ and $(Y',Z)$, respectively, they are uniform random labelled edges in $G$.
Thus, $(X_1,X_2,X_3)$ is a homomorphic copy of $K_{1,2}$, where $X_2$ is the image of degree-two vertex.
Using basic facts on entropy
(see Lemma \ref{lem:entropy}),
we have
$\HH(X,Y)=\HH(Y',Z)=\log 2e(G) $, $\HH(Y)\leq\log|G|$, and $\HH(X_1,X_2,X_3)\leq\log|\Hom(K_{1,2},G)|$,
which implies $t_{K_{1,2}}(G)\geq t_{K_2}(G)^2$.
This is also an easy consequence of the Cauchy--Schwarz inequality
and furthermore, we may recover many
graph homomorphism inequalities obtained by using 
the Cauchy--Schwarz inequality
by the same argument above with different choice of $(X,Y,Y',Z)$.

In particular, this is same as using $|\FF|=2$ and $\TT$ the single edge tree for Theorem~\ref{thm:tree_entropy}.
Hence, \eqref{eq:tree_entropy} may be seen as a
tree-like extension of the Cauchy--Schwarz inequality.
In fact, analogous lemmas to Theorem \ref{thm:tree_entropy} have already been
used in \cite{CKLL15,CL16,KR11,Sz15} to obtain such results.

To obtain graph homomorphism inequalities,
the following corollary of Theorem \ref{thm:tree_entropy},
which appeared implicitly in \cite{CL16},
is useful. 
\begin{theorem}\label{thm:tree_hom}
	Let $G,H$, and $J$ be graphs.
	Suppose that $H$ is $J$-decomposable
	and $\Hom(J,G)$ is non-empty.
	Fix a $J$-decomposition $(\FF,\TT)$ of $H$.
	Then the following inequality holds:
	\begin{align}\label{eq:tree_hom}
		t_H(G)\geq \frac{t_{J}(G)^{|\FF|}}{\prod_{XY\in E(\TT)} t_{H[X\cap Y]}(G)}.
	\end{align}
\end{theorem}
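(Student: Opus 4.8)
The plan is to deduce Theorem~\ref{thm:tree_hom} from Theorem~\ref{thm:tree_entropy} by constructing, for each bag $X\in\FF$, a random vector supported on the homomorphisms from $H[X]\cong J$ to $G$, and then stitching these together along $\TT$. Concretely, since $H$ is $J$-decomposable with decomposition $(\FF,\TT)$, after labelling $V(H)=[k]$ we may regard $(\FF,\TT)$ as a Markov tree on $[k]$ (the tree-decomposition axioms give exactly the Markov-tree axioms). For each $F\in\FF$ let $(X_{i;F})_{i\in F}$ be a uniformly random homomorphism from $H[F]$ to $G$, viewed as an element of $V(G)^F$; this is well-defined because $\Hom(J,G)\neq\emptyset$, and here I would use the second condition in the definition of $J$-decomposition — the isomorphism $H[X]\to H[Y]$ fixing $X\cap Y$ — to see that the induced distributions on the overlap match. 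Indeed, for an edge $XY\in E(\TT)$, the set $X\cap Y$ is a separator, and the symmetric-gluing condition guarantees that marginalizing a uniform random $\Hom(H[X],G)$ onto $V(G)^{X\cap Y}$ and marginalizing a uniform random $\Hom(H[Y],G)$ onto $V(G)^{X\cap Y}$ give the \emph{same} distribution, namely the one proportional to the number of ways to extend a given map on $X\cap Y$ to a homomorphism of the common "link" (which is isomorphic on both sides).

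Granting that the hypothesis of Theorem~\ref{thm:tree_entropy} is met, I obtain random variables $Y_1,\dots,Y_k$ taking values in $V(G)$ whose joint entropy equals $\sum_{F\in\FF}\HH((X_{i;F})_{i\in F}) - \sum_{XY\in E(\TT)}\HH((X_{i;X})_{i\in X\cap Y})$, and such that for every $F\in\FF$ the restriction $(Y_i)_{i\in F}$ is distributed as a uniform random homomorphism $H[F]\to G$. The next step is to turn this into the desired inequality via the standard entropy-counting dictionary. On the one hand, for each $F\in\FF$, since $(Y_i)_{i\in F}$ is uniform on $\Hom(H[F],G)=\Hom(J,G)$ we get $\HH((X_{i;F})_{i\in F})=\log|\Hom(J,G)| = \log\bigl(t_J(G)\,|V(G)|^{|V(J)|}\bigr)$; and for each edge $XY\in E(\TT)$, the vector $(X_{i;X})_{i\in X\cap Y}$ is the marginal of that uniform homomorphism onto the copy of $H[X\cap Y]$, so its entropy is at most $\log|\Hom(H[X\cap Y],G)| = \log\bigl(t_{H[X\cap Y]}(G)\,|V(G)|^{|X\cap Y|}\bigr)$. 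On the other hand, the vector $(Y_1,\dots,Y_k)$ is always supported on $\Hom(H,G)$ — because every edge of $H$ lies inside some bag $F$, and on that bag $(Y_i)_{i\in F}$ is a genuine homomorphism — so $\HH(Y_1,\dots,Y_k)\le\log|\Hom(H,G)| = \log\bigl(t_H(G)\,|V(G)|^{|V(H)|}\bigr)$ by part~(1) of Lemma~\ref{lem:entropy}.

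Combining these, and using that the powers of $|V(G)|$ cancel — this is where the tree-decomposition bookkeeping $|V(H)| = \sum_F |F| - \sum_{XY\in E(\TT)} |X\cap Y|$ (equivalently, each vertex is counted once with the usual inclusion–exclusion along the tree) does the work — we arrive at
\begin{align*}
\log\bigl(t_H(G)\bigr) \;\geq\; |\FF|\log\bigl(t_J(G)\bigr) - \sum_{XY\in E(\TT)}\log\bigl(t_{H[X\cap Y]}(G)\bigr),
\end{align*}
which exponentiates to \eqref{eq:tree_hom}. (One should note $t_J(G)>0$ since $\Hom(J,G)\neq\emptyset$, so the logarithms and the division in \eqref{eq:tree_hom} make sense; if some $t_{H[X\cap Y]}(G)=0$ the right-hand side is interpreted as $0$ and there is nothing to prove, though in fact $X\cap Y$ induces a subgraph of $J$ and hence admits a homomorphism to $G$ whenever $J$ does.)

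The main obstacle, and the step deserving the most care, is verifying that the marginals agree along every tree-edge — i.e. that $(X_{i;A})_{i\in A\cap B}$ and $(X_{j;B})_{j\in A\cap B}$ are identically distributed — since this is precisely what licenses the application of Theorem~\ref{thm:tree_entropy}. This is exactly the point at which the symmetric-gluing hypothesis in the definition of a $J$-decomposition is indispensable: without the isomorphism $H[A]\to H[B]$ fixing $A\cap B$, the two conditional extension counts could differ and the marginals would not match. I expect the cleanest way to argue this is to observe that for a uniform homomorphism $\phi$ from $H[A]$ to $G$, the probability that $\phi$ restricts to a given map $\psi\colon A\cap B\to V(G)$ is proportional to the number of homomorphic extensions of $\psi$ to all of $H[A]$; the fixing isomorphism identifies $H[A]$ with $H[B]$ over $A\cap B$, so this count is the same whether computed in $A$ or in $B$, giving equality of the two marginal laws. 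The remaining verifications — that $(\FF,\TT)$ is a Markov tree, and the vertex/overlap counting identity — are routine consequences of the tree-decomposition axioms.
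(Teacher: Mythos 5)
Your proposal is correct and follows essentially the same route as the paper: take the uniform random homomorphism on each bag, use the fixing isomorphism of the $J$-decomposition to match marginals on overlaps, apply Theorem~\ref{thm:tree_entropy}, and convert entropies to homomorphism counts with the tree-decomposition vertex-counting identity handling the normalisation. The only cosmetic difference is that you spell out the marginal-matching and bookkeeping steps that the paper treats more briefly.
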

\begin{proof}
	By definition, a $J$-decomposition $(\FF,\TT)$ is a Markov tree on $V(H)$.
	Let $\X_F=(X_{i,F})_{i\in F}$ be the uniform random homomorphism in $\Hom(J,G)$.
	Then both $(X_{i,A})_{i\in A\cap B}$ and
	$(X_{j,B})_{j\in A\cap B}$ are 
	supported on the set $\Hom(H[A\cap B],G)$ and moreover, they are identically distributed.
	This is because the distributions are projected from the uniform distribution on $\Hom(J,G)$
	in the same way, as there exists an isomorphism between $H[A]$ and $H[B]$ that fixes $A\cap B$.
	Thus, by Theorem \ref{thm:tree_entropy},
	there exists $(Y_v)_{v\in V(H)}$ such that
	\begin{align}\label{eq:apply}
		\HH((Y_v)_{v\in V(H)})
		=|\FF|\log|\Hom(J,G)|
		-\sum_{AB\in E(\TT)}\HH((X_{i;A})_{i\in A\cap B}),
	\end{align}
	where each $(Y_u)_{u\in F}$ is identically distributed with $\X_F$.
	Since each $\X_{F}$ takes values in $\Hom(J,G)$, $(Y_u,Y_v)$ is always an ordered edge in $G$.
	Thus, $(Y_v)_{v\in V(H)}$ is a (not necessarily uniform) random homomorphism from $H$ to $G$.
	Now \eqref{eq:apply} gives
	\begin{align*}
	\log|\Hom(H,G)|
	&\geq
	|\FF|\log|\Hom(J,G)|-\sum_{AB\in E(\TT)}\HH((X_{i;A})_{i\in A\cap B})\\
	&\geq|\FF|\log|\Hom(J,G)|-\sum_{AB\in E(\TT)}\log|\Hom(H[A\cap B],G)|.
	\end{align*}
	By rescaling by subtracting $|H|\log|G|$ on both sides and 
	using the identity \eqref{eq:edge_counts},
	we obtain the inequality~\eqref{eq:tree_hom}.
\end{proof}

Another technical ingredient we need is Reiher's lemma~\cite{Re14}, which enables the continuous relaxation of the $(\rho,d)$-denseness condition.
\begin{lemma}[Lemma 2.1 in \cite{Re14}]\label{lem:Reiher}
Let $G$ be a $(\rho,d)$-dense graph on $n$ vertices, and let $f:V(G)\rightarrow [0,1]$ be a function satisfying
$\sum_{v\in V(G)}f(v)\geq \rho n$.
Then
\begin{align}\label{eq:relax}
\sum_{(u,v)\in \Hom(K_2,G)}f(u)f(v)\geq d\left(\sum_{v\in V(G)}f(v)\right)^2-2n.
\end{align}
\end{lemma}
This lemma is often used in the following form.
\begin{corollary}\label{cor:Reiher}
Let $G$ be a $(\rho,d)$-dense graph on $n$ vertices and let $f(v;\xx):V(G)\rightarrow [0,1]$ be a function associated with a $k$-tuple $\xx\in V(G)^k$. 
If $\sum_{(v,\xx)\in V(G)^{k+1}}f(v;\xx)\geq \alpha n^{k+1}$,
then
\begin{align*}
   \sum_{\xx\in V(G)^k}\sum_{(u,v)\in \Hom(K_2,G)}f(u;\xx)f(v;\xx)
   \geq d\left(\alpha^2 -2\alpha \rho-2/n\right)n^{k+2}.
\end{align*}
\end{corollary}
\begin{proof}
Let $S_\rho$ be the set of $k$-tuples $\xx$ in $V(G)^k$ such that $\sum_{v\in V(G)^{k+1}}f(v;\xx)\geq \rho n$. Then by Lemma~\ref{lem:Reiher}, for each $\xx\in S_\rho$,
\begin{align*}
\sum_{(u,v)\in \Hom(K_2,G)}f(u;\xx)f(v;\xx)\geq d\left(\sum_{v\in V(G)}f(v;\xx)\right)^2-2n.
\end{align*}
Thus, summing over all $\xx\in S_\rho$ gives
\begin{align*}
\sum_{\xx\in S_\rho}\sum_{(u,v)\in \Hom(K_2,G)}f(u;\xx)f(v;\xx)
&\geq d\sum_{\xx\in S_\rho}\left(\sum_{v\in V(G)}f(v;\xx)\right)^2-2n|S_\rho|\\
&\geq \frac{d}{|S_\rho|}\left(\sum_{\xx\in S_\rho}\sum_{v\in V(G)}f(v;\xx)\right)^2-2n|S_\rho|,
\end{align*}
where the last inequality follows from convexity. As $\sum_{\xx\notin S_\rho}\sum_{v\in V(G)}f(v;\xx)\leq \rho n^{k+1}$ and $|S_\rho|\leq n^{k}$,
\begin{align*}
\sum_{\xx\in S_\rho}\sum_{(u,v)\in \Hom(K_2,G)}f(u;\xx)f(v;\xx)
&\geq \frac{d}{|S_\rho|}\left(\alpha-\rho\right)^2n^{2k+2}-2n|S_\rho|\\
&\geq d\left(\alpha^2 -2\alpha \rho-2/n\right)n^{k+2}.
\end{align*}
This is also a lower bound for $\sum_{\xx\in V(G)^k}\sum_{(u,v)\in \Hom(K_2,G)}f(u;\xx)f(v;\xx)$ and hence completes the proof.
\end{proof}

\section{Adding an edge to cycles and trees}
The two main ingredients in proving Theorem~\ref{thm:chord} are Li--Szegedy smoothness introduced in~\cite{LSz12} and H\"older's inequality.
To sketch very roughly, the crucial caveat in using the local denseness or its relaxation, Lemma~\ref{lem:Reiher}, is the symmetry. That is, in \eqref{eq:relax}, $f(u)$ and $f(v)$ must be the \emph{same} function $f$ evaluated at distinct vertices. If the asymmetry in~\eqref{eq:relax} is allowed, then it closely resembles more general condition so-called bi-denseness, and an analogue of Conjecture~\ref{conj:KNRS} will be an easy consequence.

The starting point is to observe that a recent idea, the `H\"older trick' used in~\cite{CL18}, provides some symmetrisation. However, the H\"older trick inevitably produces rational exponents on the functions, i.e., the normalised number of graphs supported on fixed vertices, in the expectation.
These rational exponents are in general hard to control, especially if they are smaller than one and hence allow no convexity inequalities. However, Li--Szegedy smoothness, which will be introduced shortly, enables us to handle them to deduce Theorem~\ref{thm:chord}.

\medskip

Let $U$ be a vertex subset of $H$. For each $\phi\in \Hom(H[U],G)$, denote by $t_H(G;\phi)$ be the normalised number of homomorphisms in $\Hom(H,G)$ that extend $\phi$, i.e.,
\[
t_H(G;\phi):=\frac{\left|\{\psi\in\Hom(H,G):\psi|_{U}=\phi\}\right|}{|G|^{|H|-|U|}}.
\]
If $U$ consists of a single vertex or a pair of vertices that are mapped to $x\in V(G)$ or $(x,y)\in V(G)^2$ and is already specified in the context, we also write $t_H(G;x)$ or $t_H(G;x,y)$, respectively.

We say that $U$ is \emph{smooth} in $H$ if there exists a probability distribution $p:\Hom(H[U],G)\rightarrow [0,1]$ such that
\begin{align}\label{eq:smooth}
   & \sum_{\phi\in\Hom(H[U],G)} p_\phi\log t_H(G;\phi)\geq (e(H)-e[U])\log t_{K_2}(G)
    \\~\text{ and }~
   &\sum_{\phi\in\Hom(H[U],G)} p_\phi\log (1/p_\phi)-|U|\log|G|\geq e[U]\log t_{K_2}(G),\label{eq:large_entropy}
\end{align}
where $p_\phi=p(\phi)$. For brevity, we also say that the induced subgraph $H[U]$ is \emph{smooth} in $H$ whenever $U$ is.
We also say that such $p$ \emph{realises} the smoothness of $U$ (or $H[U]$) in $H$.
The following theorem essentially appeared in~\cite{LSz12}, although we state it in a slightly more general way. We give a short proof for completeness.
\begin{lemma}[\cite{LSz12}, Corollary 3.1]
If there exists $U\subseteq V(H)$ that is smooth in $H$, then $H$ has Sidorenko's property.
\end{lemma}
\begin{proof}
As $\{\psi\in\Hom(H,G):\psi|_{U}=\phi\}$, $\phi\in \Hom(H[U],G)$, partitions $\Hom(H,G)$,
\begin{align}\label{eq:partition}
   |G|^{|U|} t_H(G)=\sum_{\phi\in\Hom(H[U],G)} t_H(G;\phi).
\end{align}
We denote by $\EE_p[\cdot]$ the expectation taken by the probability distribution $p$ that realises the smoothness of $U$ in $H$.
Then
\begin{align*}
    \log\left(\sum_{\phi\in\Hom(H[U],G)} t_H(G;\phi)\right)&=
    \log\EE_p \left[\frac{t_H(G;\phi)}{p_\phi}\right]\\
    &\geq\EE_p\Big[\log t_H(G;\phi) +\log(1/p_\phi)\Big]\\
    &\geq e(H)\log t_{K_2}(G)+ |U|\log|G|,
\end{align*}
where the first inequality follows from concavity of logarithm and the second from the smoothness of $U$ in $H$. Comparing this bound with \eqref{eq:partition} concludes the proof.
\end{proof}

As a partial converse, Li and Szegedy proved that Sidorenko's property implies the smoothness of an edge.
\begin{theorem}[\cite{LSz12}, Theorem~5]\label{thm:edge_smooth}
    If $H$ has Sidorenko's property, then every edge in $E(H)$ is smooth in $H$.
\end{theorem}
If $p$ realises the smoothness of an edge $e$ in $H$, then \eqref{eq:large_entropy} reduces to
\begin{align*}
    \sum_{\phi\in\Hom(K_2,G)} p_\phi\log(1/p_{\phi})\geq \log |\Hom(K_2,G)|=\log 2e(G) .
\end{align*}
The left hand side is the entropy of the distribution $p$, whose maximum value is the right hand side. Thus, the equality holds and moreover, $p$ must be the uniform distribution on $\Hom(K_2,G)$.
In other words, the smoothness of an edge in $H$ is equivalent to
\begin{align}\label{eq:smooth_edge}
    \EE \big[\log t_H(G;\phi)\big]\geq (e(H)-1)\log t_{K_2}(G),
\end{align}
where the expectation is taken over the set of uniform random ordered edges $\phi(e)$ in $G$.

We are now ready to prove the first half of Theorem~\ref{thm:chord}.
\begin{theorem}
Conjecture~\ref{conj:KNRS} is true if $H$ is a cycle plus a chord.
\end{theorem}
\begin{proof}
Let $G$ be an $n$-vertex $(\rho,d)$-dense graph, where $\rho$ will be specified later in the proof.
We may assume that $H$ is the graph obtained by adding a chord edge to a cycle $C_k$, where $V(C_k)=\mathbb{Z}_k$ and $E(C_k)=\{\{i,i+1\}:i=1,2,\cdots,k\}$.
All the additions to represent vertex labels in $C_k$ will be taken modulo $k$ throughout the proof, e.g., $k-r=-r$.

Suppose firstly that $k=2m$ is an even integer.
If the chord edge produces two shorter even cycles, then by Theorem~2 in~\cite{LSz12}, $H$ has Sidorenko's property.
Otherwise, $H$ has two odd cycles $C_{2r+1}$ and $C_{2m-2r+1}$ on $\{-r,-r+1,\cdots,r\}$ and $\{r,r+1,\cdots,2m-r\}$, respectively, i.e., the chord edge is $\{-r,r\}$.

We shall embed the two antipodal vertices $0$ and $m$ to two fixed vertices $x,y$ in $G$ and count the normalised number of homomorphic $H$-copies by using Lemma~\ref{lem:Reiher}, denoted by $t_H(G;x,y)$.
Let $f(v;x,y)$ be the normalised number of the $m$-edge (directed) walks from $x$ to $y$ such that $v$ is the $(r+1)$-th vertex in each walk. Here the normalisation means dividing the number of such paths by $n^{m-2}$ to guarantee $0\leq f(v;x,y)\leq 1$. 
Then, Sidorenko's property of paths\footnote{This was obtained multiple times independently by Mullholland--Smith~\cite{MS59}, Atkinson--Watterson--Moran~\cite{AWM60}, and Blakley--Roy~\cite{BR65}, although often cited as the Blakley--Roy inequality.} gives
\begin{align}\label{eq:pathcount}
    \sum_{x,y,v\in V(G)} f(v;x,y)= n^3 t_{P_{m}}(G)\geq d^{m}n^3.
\end{align}
Hence, Corollary~\ref{cor:Reiher} implies
\begin{align*}
    \sum_{(x,y)\in V(G)^2} t_H(G;x,y)&=\sum_{x,y\in V(G)}\sum_{(u,v)\in \Hom(K_2,G)}f(u;x,y) f(v;x,y)\\
    &\geq d(d^m-\rho)^2 n^2 - 2n\\
    &\geq d^{2m+1}\left(1-2\rho/d^{m}-2/nd^m\right)n^2. 
\end{align*}
Thus, taking $\rho=\eta d^{m}/4$ and $n>4/\eta d^m$ gives the desired bound $t_H(G)\geq d^{2m+1}(1-\eta)$.
\medskip

Suppose now that $k=2m+1$ is an odd integer. Then adding a chord to $C_k$ always makes two shorter cycles $C_{2r+1}$ and $C_{2m-2r+2}$ with distinct parity. We may assume that the chord edge is $\{-r,r\}$, $V(C_{2r+1})=\{-r,-r+1,\cdots,r\}$ and $V(C_{2m-2r})=\{r,r+1,\cdots,2m+1-r\}$.
Denote by $h(x,y)$ the number of $(2m-2r+1)$-edge walks from $x$ to $y$ divided by $n^{2m-2r}$ if $(x,y)$ is an (ordered) edge in $G$. Otherwise let $h=0$. Let $g(x,y)$ be the edge indicator function of $G$. Then
\begin{align*}
    t_H(G)=\EE\left[h(x_r,x_{-r})g(x_r,x_{-r})\prod_{i=-r}^{r-1}g(x_i,x_{i+1})\right],
\end{align*}
where the expectation is taken over the uniform random vertices $x_{i}, -r\leq i\leq r$. By the natural symmetry in $C_{2r+1}$, we may also write
\begin{align*}
    t_H(G)=\EE\left[h(x_j,x_{j+1})g(x_r,x_{-r})\prod_{i=-r}^{r-1}g(x_i,x_{i+1})\right]
\end{align*}
for any $-r\leq j\leq r-1$. Denote by $F_j$ the product of functions in the expectation above, i.e., $F_j:=h(x_j,x_{j+1})g(x_r,x_{-r})\prod_{i=-r}^{r-1}g(x_i,x_{i+1})$, so that $t_H(G)=\EE[F_j]$.
Then H\"{o}lder's inequality gives
\begin{align*}
    t_H(G) =\left( \prod_{j=-r}^{r-1} \EE [F_j ]\right)^{\frac{1}{2r}}
    \geq \EE\left[ \prod_{j=-r}^{r-1}  F_j^{\frac{1}{2r}}\right]
    = \EE \left[g(x_{-r},x_r)\prod_{i=-r}^{r-1}g(x_i,x_{i+1})h(x_i,x_{i+1})^{\frac{1}{2r}}\right].
\end{align*}
We claim that 
\begin{align}
    \EE \left[\prod_{i=0}^{r-1}g(x_i,x_{i+1})h(x_i,x_{i+1})^{\frac{1}{2r}}\right] \geq \left(m+\frac{1}{2}\right)\log t_{K_2}(G).
\end{align}
Suppose that the claim is true.
We may rewrite the lower bound from H\"older's inequality as
\begin{align}\label{eq:rewrite}
    t_H(G)\geq \EE \left[g(x_{-r},x_r)\prod_{i=0}^{r-1}g(x_i,x_{i+1})h(x_i,x_{i+1})^{\frac{1}{2r}}\prod_{i=0}^{-r+1}g(x_i,x_{i-1})h(x_i,x_{i+1})^{\frac{1}{2r}}\right].
\end{align}
This allows us to apply Corollary~\ref{cor:Reiher}
with the choice $$f(v;\xx)=\EE\left[\prod_{i=0}^{r-1}g(x_i,x_{i+1})h(x_i,x_{i+1})^{\frac{1}{2r}}\middle\vert x_0=\xx, x_r=v \right]$$ 
and $\alpha= d^{m+1/2}$, since the lower bound in~\eqref{eq:rewrite} is exactly $\EE\left[f(v;\xx)f(u;\xx)g(u,v)\right]$.
Thus, by Corollary~\ref{cor:Reiher},
\begin{align*}
    t_H(G) \geq d^{2m+1}(1-2\rho/d^{m+1/2}-2/nd^{2m}).
\end{align*}
Taking $\rho=1/4\eta d^{m+1/2}$ and $n>4/\eta d^{2m}$ completes the proof.
\medskip

It remains to prove the claim. By Sidorenko's property of paths, we have
\begin{align}\label{eq:path_exponent}
    \EE \left[\prod_{i=0}^{r-1}g(x_i,x_{i+1})h(x_i,x_{i+1})^{\frac{1}{2r}}\right]\geq 
     \EE \left[g(x,y)h(x,y)^{\frac{1}{2r}}\right]^{r}.
\end{align}
Let $\tilde{\EE}[.]$ be the expectation taken by the distribution of a uniform random (labelled) edge, i.e., $\tilde{\EE}[f]=\frac{1}{t_{K_2}(G)}\EE[g(x,y)f]$ whenever $f$ is supported on the set of edges. We may then write
\begin{align*}
    \EE \left[g(x,y)h(x,y)^{\frac{1}{2r}}\right]
    =t_{K_2}(G)\tilde{\EE} \left[h(x,y)^{\frac{1}{2r}}\right]
    =t_{K_2}(G)\tilde{\EE} \left[t_{C_{2m-2r+2}}(G;x,y)^{\frac{1}{2r}}\right].
\end{align*}
Hence, by using concavity of logarithm, we obtain 
\begin{align*}
    \log \EE \left[g(x,y)h(x,y)^{\frac{1}{2r}}\right]
    &=\log t_{K_2}(G) +\log \tilde{\EE} \left[t_{C_{2m-2r+2}}(G;x,y)^{\frac{1}{2r}}\right]\\
    &\geq \log t_{K_2}(G) +\frac{1}{2r}\tilde{\EE} \left[\log t_{C_{2m-2r+2}}(G;x,y)\right]
\end{align*}
Since even cycles have Sidorenko's property,\footnote{Sidorenko~\cite{Sid93} firstly proved this fact, though it was already implicitly proven in Chung--Graham--Wilson's definition of quasirandomness~\cite{CGW89}. It is also reproved in ~\cite{H10, LSz12} since then.} 
a pair of vertices that induce an edge is smooth in the even cycle $C_{2m-2r+2}$ by Theorem~\ref{thm:edge_smooth}.
The smoothness of an edge, as rephrased in~\eqref{eq:smooth_edge}, then yields
\begin{align*}
    \tilde{\EE} \left[\log t_{C_{2m-2r+2}}(G;x,y)\right] \geq (2m-2r+1)\log t_{K_2}(G).
\end{align*}
Together with~\eqref{eq:path_exponent}, this concludes the proof of the claim.
\end{proof}

To prove Conjecture~\ref{conj:KNRS} for unicyclic graphs, we need the following fact, already appeared in~\cite{LSz12} and explained in terms of information theory in~\cite{CKLL15,Sz15}.
\begin{theorem}[(7) in~\cite{LSz12}]\label{thm:subtree_smoothness}
Let $T$ be a tree. Then every induced subtree $S$ is smooth in $T$.
Moreover, the smoothness of $S$ in $T$ is realised by the same distribution that realises the smoothness of $S$ in~$T'$ for any tree $T'$ that contains $S$ as an induced subtree.  
\end{theorem}
More precisely, the smoothness of an induced subtree in $T$ is always realised by the distribution induced by the \emph{tree branching random walk}, that is, starting from a uniform random edge and branching uniformly at random to obtain a homomorphic copy of a tree. For more details, see~\cite{CKLL15}.

\begin{theorem}\label{thm:unicycle}
Conjecture~\ref{conj:KNRS} is true if $H$ is unicyclic.
\end{theorem}
\begin{proof}
    If the unique cycle in $H$ is of even length, then $H$ is bipartite.
    Moreover, $H$ has Sidorenko's property, as shown in Theorem~2 in~\cite{Sid92} and thus the conclusion easily follows.
    
    Thus, we may assume that the unique cycle in $H$ is of odd length. Let $-m,-m+1,\cdots,m$ be vertices in $V(H)$ that induce an odd cycle of length $2m+1$ with edges $\{i,i+1\}$, $-m\leq i\leq m$, where the addition is taken modulo $2m+1$. Throughout the proof, all the additions of vertex labels will be taken modulo $2m+1$.
    
    Let $T_i$ be the subgraph of $H$ induced on $V(H)\setminus ([2m+1]\setminus\{i\})$. In particular, $T_i$ is a tree that contains the unique vertex $\{i\}$ from the cycle.
    Let $\tau_i(x):=t_{T_i}(G;x)$ for $x\in V(G)$, that is, the number of homomorphisms from $T_i$ to $G$ that maps $i$ to $x$ divided by $|G|^{|T_i|-1}$. Denote by $g$ the edge indicator of $G$.
    We may then write
    \begin{align*}
        t_H(G) = \EE\left[\prod_{i=-m}^{m}g(x_i,x_{i+1})\tau_i(x_i)\right],
    \end{align*}
    or, by the symmetry of $C_{2m+1}$ that maps $i$ to $-i$,
    \begin{align*}
        t_H(G) = \EE\left[\prod_{i=-m}^{m}g(x_i,x_{i+1})\tau_{-i}(x_i)\right].
    \end{align*}
   Applying the Cauchy--Schwarz inequality then yields
    \begin{align*}
        t_H(G)\geq \EE\left[ \prod_{i=-m}^{m} g(x_i,x_{i+1})\prod_{j=-m}^{m} \sqrt{\tau_{-j}(x_j)\tau_{j}(x_j)}\right].
    \end{align*}
    We aim to rephrase above to apply Corollary~\ref{cor:Reiher}. To this end, let $\sigma_j(x):=\sqrt{\tau_j(x)\tau_{-j}(x)}$ so that $\sigma_j=\sigma_{-j}$. Then the right-hand side above is
    \begin{align}\label{eq:symmetrised}
    \EE\left[g(x_{-m},x_{m})\tau_{0}(x_{0})
    \prod_{i=1}^{m}\sigma_{i}(x_{i})g(x_i,x_{i-1})
    \prod_{j=1}^{m}\sigma_{j}(x_{-j})g(x_{-j},x_{-j+1}) \right].
    \end{align}
    We claim that
    \begin{align}\label{eq:claim_unicyclic}
        \EE\left[\sqrt{\tau_{0}(x_{0})}    \prod_{i=1}^{m}\sigma_{i}(x_{i})g(x_i,x_{i-1})\right]\geq 
        \frac{1}{2}\left(e(H)-1\right)\log t_{K_2}(G)
    \end{align}
    If the claim is true, then by letting
    \begin{align*}
        f(v;\xx)=\EE\left[\sqrt{\tau_{0}(x_{0})}    \prod_{i=1}^{m}\sigma_{i}(x_{i})g(x_i,x_{i-1})\middle\vert x_0=\xx, x_m=v\right]
    \end{align*}
    and $\alpha=d^{\frac{1}{2}(e(H)-1)}$ in Corollary~\ref{cor:Reiher},
    \eqref{eq:symmetrised} gives
    \begin{align*}
        t_H(G)\geq \EE\big[f(u;\xx)f(v;\xx)g(u,v)\big]\geq d^{e(H)}(1-2\rho/d^{\frac{1}{2}(e(H)-1)}-2/nd^{e(H)-1}).
    \end{align*}
    Taking $\rho=\eta d^{\frac{1}{2}(e(H)-1)}/4$ and $n>4\eta/d^{e(H)-1}$ therefore finishes the proof.
    \medskip
    
    It remains to verify the claim. For $i\geq 0$, let $T_i'$ be the tree induced on $V(T_i)\cup\{0,1,\cdots,m\}$ and let $T_{-i}'$ be the tree obtained by identifying $-i\in V(T_{-i})$ and the vertex $i$ on the $m$-edge path $P$ on $\{0,1,\cdots,m\}$.
    By Theorem~\ref{thm:subtree_smoothness}, $V(P)=[m]$ is smooth in $T_i'$ 
    and moreover, its smoothness is realised by the same distribution $p(x_0,x_1,\cdots,x_m)$ on $\Hom(P_m,G)$ regardless of $-r\leq i\leq r$.
    Now write the left-hand side of~\eqref{eq:claim_unicyclic} as
    \begin{align*}
        \EE\left[\sqrt{\tau_{0}(x_{0})}    \prod_{i=1}^{m}\sigma_{i}(x_{i})g(x_i,x_{i-1})\right]
        =\tilde{\EE}\left[\frac{\sqrt{\tau_{0}(x_{0})}    \prod_{i=1}^{m}\sigma_{i}(x_{i})g(x_i,x_{i-1})}{p(x_0,x_1,\cdots,x_m)}\right],
    \end{align*}
    where $\tilde{\EE}[.]=\EE[p(.)]$. This is possible since $\sqrt{\tau_{0}(x_{0})}    \prod_{i=1}^{m}\sigma_{i}(x_{i})g(x_i,x_{i-1})=0$
    whenever $p=0$. Then by concavity of logarithm,
    \begin{align*}
        &\log \tilde{\EE}\left[\frac{\sqrt{\tau_{0}(x_{0})}    \prod_{i=1}^{m}\sigma_{i}(x_{i})g(x_i,x_{i-1})}{p(x_0,x_1,\cdots,x_m)}\right]
       \geq \tilde{\EE}\left[\log\left(\frac{\sqrt{\tau_{0}(x_{0})}    \prod_{i=1}^{m}\sigma_{i}(x_{i})g(x_i,x_{i-1})}{p(x_0,x_1,\cdots,x_m)}\right)\right]\\
       &=\frac{1}{2}\sum_{i=0}^{m}\tilde{\EE}\Big[\log \tau_i(x_i) \Big]
       +\frac{1}{2}\sum_{i=1}^{m}\tilde{\EE}\Big[\log\tau_{-i}(x_i) \Big]
       +\tilde{\EE}\left[\log \prod_{i=1}^m g(x_i,x_{i-1}) \right]
       +\tilde{\EE}\left[\log\left(\frac{1}{p(x_0,\cdots,x_m)}\right)\right].
    \end{align*}
    Firstly, as $p$ is supported on $\Hom(P_m,G)$, $\prod_{i=1}^m g(x_i,x_{i-1})=1$ whenever $p$ is nonzero. Thus, the term $\tilde{\EE}\big[\log \prod_{i=1}^m g(x_i,x_{i-1}) \big]$ is always zero. Secondly, $\tilde{\EE}\left[\log\left(\frac{1}{p(x_0,\cdots,x_m)}\right)\right]$ is the entropy of the distribution $p$, which is at least $m\log t_{K_2}(G)$ by~\eqref{eq:large_entropy}. 
    Observe that, for each $i\geq 0$,
    \begin{align*}
       & \tilde{\EE}\Big[\log \tau_i(x_i)\Big]=\EE\Big[p(x_0,\cdots,x_m)\log t_{T_i'}(G;x_0,\cdots,x_m)\Big]\text{ and }\\
        &\tilde{\EE}\Big[\log \tau_{-i}(x_i)\Big]=\EE\Big[p(x_0,\cdots,x_m)\log t_{T_{-i}'}(G;x_0,\cdots,x_m)\Big].
    \end{align*}
    Indeed, $\tau_i(x_i)$ counts the normalised number of homomorphic copies of $T_i$ such that $i$ is mapped to $x_i$,
    which is equal to the normalised number of homomorphic copies of $T_i'$ such that $P$ is mapped to the (homomorphic) $m$-edge path on $\{x_0,x_1,\cdots,x_m\}$.
    By the same reason, $\tau_{-i}(x_i)$ is the normalised number of homomorphic copies of $T_{-i}'$ supported on the $m$-edge path on $\{x_0,\cdots,x_m\}$.
    Therefore, by the smoothness condition~\eqref{eq:smooth} of $V(P)=[m]$ in $T_i'$ or in $T_{-i}'$, it follows that
    \begin{align*}
        \tilde{\EE}\Big[\log \tau_i(x_i) \Big]\geq e(T_i)\log t_{K_2}(G)~\text{ and }~
        \tilde{\EE}\Big[\log\tau_{-i}(x_i) \Big] \geq e(T_{-i})\log t_{K_2}(G)
    \end{align*}
    for each $0\leq i\leq m$. Therefore, we obtain
    \begin{align*}
        \EE\left[\sqrt{\tau_{0}(x_{0})}    \prod_{i=1}^{m}\sigma_{i}(x_{i})g(x_i,x_{i-1})\right]
        \geq \left(m+\frac{1}{2}\sum_{i=-m}^{m}e(T_i)\right)\log t_{K_2}(G),
    \end{align*}
    which, together with the fact $e(H)=2m+1+\sum_{i=-m}^{m} e(T_i)$, proves~\eqref{eq:claim_unicyclic}.
\end{proof}

\section{Counting $K(r_1,r_2,\cdots,r_\ell)$-decomposable graphs}\label{sec:multipart}

In this section, we prove the following half of Theorem~\ref{thm:treelike}.

\begin{theorem}\label{thm:multipartite}
Let $r_1,r_2,\cdots,r_\ell$ be non-negative integers.
Then Conjecture \ref{conj:KNRS} is true if $H$ is a $K(r_1,r_2,\cdots,r_\ell)$-decomposable graph.
\end{theorem}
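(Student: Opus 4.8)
The plan is to follow the information-theoretic ``tree-like entropy'' strategy used in \cite{CL16} for Sidorenko's property, adapting it to the locally dense setting. Fix a $K(r_1,\dots,r_\ell)$-decomposable graph $H$ with $J$-decomposition $(\FF,\TT)$, where $J=K(r_1,\dots,r_\ell)$, and root $\TT$ at an arbitrary bag $X_0$. The first step is to reduce the $(\rho,d)$-dense hypothesis to a cleaner ``weighted graphon'' picture: as is standard (and as implicitly used in \cite{Re14, KNRS10}), being $(\rho,d)$-dense means that, up to an $o(1)$ loss absorbed into $\eta$, we may pass to a symmetric measurable $W\colon [0,1]^2\to[0,1]$ with the property that $\int_{S\times S} W \ge d\,|S|^2$ for every measurable $S\subseteq[0,1]$ with $|S|\ge\rho$, and it suffices to bound $t_H(W)\ge d^{|E(H)|}-\eta$. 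So the real content is a homomorphism-density inequality for such locally dense $W$.

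Next I would set up the random embedding of $H$ into $W$ bag by bag along $\TT$, exactly mirroring the proof that weakly norming $J$-decomposable graphs are Sidorenko. Concretely, one builds a coupling: sample the vertices of the root bag $X_0$ according to the distribution that is ``locally optimal'' for the single copy $H[X_0]\cong J$ — i.e.\ the distribution witnessing that complete multipartite graphs satisfy Conjecture~\ref{conj:KNRS} (the folklore/Reiher-type result, which is the base case $H=J$) — and then, processing $\TT$ from the root outwards, sample each new bag $Y$ conditioned on the already-sampled separator $X\cap Y$, again using the local density of $W$ restricted to the relevant coordinate. The entropy $\mathbb{H}$ of the resulting random homomorphism decomposes telescopically over the edges of $\TT$ because condition~(3) of a tree decomposition makes the separators genuine Markov separators; and the ``fixing $X\cap Y$'' symmetry condition in the definition of $J$-decomposition is exactly what is needed so that the conditional step for bag $Y$ can reuse the optimal local distribution for $J$ without distortion. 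Comparing $\mathbb{H}$ of this measure against $\mathbb{H}$ of the uniform measure on $V(H)$-tuples, and using that each edge of $H$ lies in some bag, should yield $t_H(W)\ge \prod_{XY\in E(\TT)}(\text{local }J\text{-bound})/(\text{separator correction}) \ge d^{|E(H)|}-\eta$.

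The two steps I expect to be the main obstacles are: (i) establishing the base case cleanly — that for $J=K(r_1,\dots,r_\ell)$ a locally dense $W$ admits a probability measure $\mu_J$ on $[0,1]^{V(J)}$ supported on homomorphisms with the right marginal structure and entropy deficit at most $O(\eta)$ below the uniform one; Reiher's argument for odd cycles and the folklore complete-graph argument suggest this should go through, but getting the quantitative $\rho\to 0$ dependence uniform in the gluing is delicate; and (ii) controlling error propagation down the tree — each conditioning step introduces a small exceptional set of ``bad'' separator values (where the restricted neighbourhood fails to be $\rho$-large, hence where local density gives nothing), and one must show these bad events have total probability $o(1)$ after summing over all $|\FF|$ bags, choosing $\rho=\rho(\eta,H)$ small enough at the end. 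Once both are in hand, assembling the telescoping entropy bound and translating back from $W$ to a genuine $(\rho,d)$-dense graph $G$ is routine, and Theorem~\ref{thm:KrTree} follows by specialising to $J=K_{r+1}$.
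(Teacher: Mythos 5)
Your overall strategy is the paper's: glue along the $J$-decomposition using the tree-entropy machinery (Theorem~\ref{thm:tree_hom}), with the symmetry condition guaranteeing matching separator marginals. But the proposal is missing the one ingredient that carries all the quantitative weight. After applying Theorem~\ref{thm:tree_hom} one gets
\begin{align*}
t_H(G)\;\geq\;\frac{t_{J}(G)^{|\FF|}}{\prod_{XY\in E(\TT)} t_{H[X\cap Y]}(G)},\qquad J=K(r_1,\dots,r_\ell),
\end{align*}
and the whole difficulty is to bound the separator densities $t_{H[X\cap Y]}(G)$ \emph{from above} relative to $t_J(G)$. A $(\rho,d)$-dense graph only carries a lower bound on edge counts, so $t_{H[X\cap Y]}(G)$ can be far larger than $d^{|E(H[X\cap Y])|}$ (think of $G$ containing a huge clique); hence you cannot simply multiply ``local $J$-bounds'' and absorb a ``separator correction'' of the expected size. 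What is needed is a relative counting inequality of the form $t_{K(r_1,\dots,r_\ell)}(G)\geq (d^{|E(K(r_1,\dots,r_\ell))|-|E(K(s_1,\dots,s_\ell))|}-\delta)\,t_{K(s_1,\dots,s_\ell)}(G)$ for all complete multipartite separators with $s_i\le r_i$; this is the paper's Lemma~\ref{lem:multi} and Corollary~\ref{cor:KrKs}, proved by a Cauchy--Schwarz log-convexity step plus a common-neighbourhood argument using the star inequality $t_{K_{1,r}}\geq t_{K_2}^r$ on the induced graph $G[C_\phi]$. Nothing in your proposal supplies this, and without it the telescoping entropy bound does not close: the entropy of the separator marginal is only bounded by $\log|\Hom(H[X\cap Y],G)|$, so you need exactly this ratio inequality.

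Your proposed substitute---conditional resampling of each bag given the separator, with ``bad'' separator values shown to have total probability $o(1)$---does not repair this. Local density gives no pointwise, nor even typical-case, lower bound on the number of extensions of a fixed separator copy to a copy of $J$: under the marginal induced by any reasonable root distribution, a non-negligible mass of separator copies may have small common neighbourhoods and essentially no extensions, and the $(\rho,d)$-dense hypothesis says nothing about them. The paper avoids this by never arguing pointwise: the uniform distribution on $\Hom(J,G)$ is used in every bag, and all losses are handled through the averaged inequality of Lemma~\ref{lem:multi} (bad homomorphisms contribute at most $\rho^{1+r_2}n^{\cdots}$ in aggregate). Your base-case framing (``entropy deficit at most $O(\eta)$ below uniform'') is also not the relevant quantity; what is needed is $t_J(G)\geq d^{|E(J)|}-\delta$ (Theorem~\ref{thm:folklore}/Corollary~\ref{cor:KrKs}) together with the ratio bound above. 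The preliminary passage to a graphon $W$ is unnecessary for this argument and only adds an unproved transfer step; the paper works directly with the graph $G$.
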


As a warm-up, we begin by proving the folklore fact that 
Conjecture \ref{conj:KNRS} is true for the case $H=K_r$.
It will also be technically helpful in what follows.
\begin{theorem}\label{thm:folklore}
Given $\eta,d>0$ and positive integer $r$, there exists $\rho=\rho(\eta,d,r)>0$ such that 
		\begin{align*}
			t_{K_r}(G)\geq (1-\eta)d^{r(r-1)/2}
		\end{align*}			
	for every sufficiently large $(\rho,d)$-dense graph $G$.
\end{theorem}

This follows immediately from a recursive statement,
which states that we may add an 
apex vertex to any graph satisfying
Conjecture \ref{conj:KNRS} to obtain another:
\begin{theorem}\label{thm:apex}
Let $\widehat{H}$ be the graph obtained by adding a vertex to $H$ 
which is adjacent to all vertices in $H$.
If Conjecture \ref{conj:KNRS} is true for $H$,
then it is also true for $\widehat{H}$.
\end{theorem}
\begin{proof}
Let $\rho>0$ be such that
\begin{align*}
 t_{H}(G)\geq (1-\eta/2)d^{e(H) }
\end{align*}
whenever $J$ is a sufficiently large $(\rho,d)$-dense graph.
We may assume that $\rho\leq\eta d/(2|H|)$.
Let $G$ be a $(\rho^2,d)$-dense graph on $n$ vertices.
Denote by $U$ the set of vertices $v$ in $G$ such that
$\deg(v) \geq \rho n$. 
Let $a$ be the apex vertex in $\widehat{H}$
and let $c(v)$ be the number of homomorphisms $\phi$ from $\widehat{H}$ to $G$ such that $\phi(a)=v$.
Observe that for any $W\subseteq V(G)$ of size $|W|\geq \rho n$,
the induced subgraph $G[W]$ is $(\rho,d)$-dense.
Thus,
\begin{align}\label{eq:Hhat}
	|\Hom(\widehat{H},G)|
	&=\sum_{v\in V(G)} c(v)
	\geq \sum_{u\in U} |\Hom(H,G[N(u)])|\nonumber\\
	&\geq \sum_{u\in U}(1-\eta/2)d^{e(H) }|N(u)|^{|H|}\nonumber\\
	&\geq \frac{(1-\eta/2)d^{e(H) }}{|U|^{|H|-1}}\left(\sum_{u\in U}|N(u)|\right)^{|H|},
\end{align}
where the last inequality follows from convexity.
Note that
\begin{align}\label{eq:NU}
\sum_{u\in U}|N(u)|&= 2e(G) -\sum_{v\notin U}|N(v)|\\
&\geq (d-\rho)n^2\geq \left(1-\frac{\eta}{2|H|}\right)dn^2,
\end{align}
where the last inequality follows from $\rho\leq\eta d/(2|H|)$.
By combining \eqref{eq:Hhat} and \eqref{eq:NU}, we obtain
\begin{align*}
	|\Hom(\widehat{H},G)|
	&\geq (1-\eta/2)\left(1-\frac{\eta}{2|H|}\right)^{|H|}d^{|H|+e(H) }n^{|H|+1}\\
	&\geq (1-\eta)d^{e(\widehat{H})}n^{|\widehat{H}|}.\tag*{\qedhere} 
\end{align*} 
\end{proof}

However, the classical approach above does not give a tight enough comparison  between~$t_{K_{r+1}}(G)$ and~$t_{K_{r}}(G)$.
When applying Theorem \ref{thm:tree_hom},
the main difficulty often lies in
bounding the terms $t_{H[X\cap Y]}(G)$ from above
in terms of $t_J(G)$.
The following lemma gives the control needed to  
prove Theorem \ref{thm:multipartite}. 
\begin{lemma}\label{lem:multi}
Given $\delta>0$ and positive integers $\ell,r_1,r_2,\cdots, r_\ell$,
let $r=\sum_{i=1}^\ell r_i$.
Then there exists $\rho=\rho(\delta,d,r_1,r_2,\cdots,r_\ell)$ such that
\begin{align*}
	t_{K(r_1,r_2,\cdots,r_\ell)}(G)\geq
	(1-\delta)d^{r-r_1}
	t_{K(r_1-1,r_2,\cdots,r_\ell)}(G)
\end{align*}
for every sufficiently large $(\rho,d)$-dense graph $G$.
\end{lemma}
\begin{proof}
Suppose $r_1\geq 2$.
Then the complete $\ell$-partite graph $K(r_1,r_2,\cdots,r_\ell)$ can be obtained by
gluing two copies of $K(r_1-1,r_2,\cdots,r_\ell)$
along their subgraphs induced on each vertex set minus a vertex in the vertex class of size $r_1-1$ in the $\ell$-partition,
which is isomorphic to $K(r_1-2,r_2,\cdots,r_\ell)$.
Hence, by the Cauchy--Schwarz inequality or Theorem \ref{thm:tree_hom} with $|\FF|=2$,
we have
\begin{align*}
t_{K(r_1,r_2,\cdots,r_\ell)}(G)
\geq \frac{t_{K(r_1-1,r_2,\cdots,r_\ell)}(G)^2}{t_{K(r_1-2,r_2,\cdots,r_\ell)}(G)}.
\end{align*}
Here we do not worry about the case $t_{K(r_1-2,r_2,\cdots,r_\ell)}(G)=0$, because by Theorem \ref{thm:folklore}
it is always positive.
Repeating this inequality gives the following log-convexity:
\begin{align}\label{eq:logconvex}
\frac{t_{K(r_1,r_2,\cdots,r_\ell)}(G)}{t_{K(r_1-1,r_2,\cdots,r_\ell)}(G)}
\geq
\frac{t_{K(r_1-1,r_2,\cdots,r_\ell)}(G)}{t_{K(r_1-2,r_2,\cdots,r_\ell)}(G)}
\geq\cdots\geq 
\frac{t_{K(1,r_2,\cdots,r_\ell)}(G)}{t_{K(r_2,\cdots,r_\ell)}(G)}.
\end{align}
Thus, the goal reduces to the case $r_1=1$.
We claim that, given $\eta>0$, there exists $\rho=\rho(\eta,d,r_2,\cdots,r_\ell)>0$ such that
\begin{align}\label{eq:multipart_recursion}
t_{K(1,r_2,\cdots,r_\ell)}(G)\geq 
(1-\eta)d^{r_2}t_{K(r_2+1,\cdots,r_\ell)}(G)
\end{align}
whenever $G$ is $(\rho,d)$-dense.
If the claim is true, then using \eqref{eq:multipart_recursion}
repeatedly to reduce the number of colour classes
and applying \eqref{eq:logconvex} to reduce the number of vertices in each class
yields
\begin{align*}
\frac{t_{K(r_1,r_2,\cdots,r_\ell)}(G)}{t_{K(r_1-1,r_2,\cdots,r_\ell)}(G)}
&\geq
\frac{(1-\eta)d^{r_2}t_{K(r_2+1,\cdots,r_\ell)}(G)}{t_{K(r_2,\cdots,r_\ell)}(G)}\\
&\geq\cdots\geq
(1-(\ell-1)\eta)d^{r_2+\cdots+r_\ell}\frac{t_{K(r_\ell+1)}(G)}{t_{K(r_\ell)}(G)}.
\end{align*}
Since both $K(r_{\ell}+1)$ and $K(r_\ell)$ consist of isolated vertices, 
taking $\eta=\delta/(\ell-1)$ is enough to conclude.

Thus, it remains to prove \eqref{eq:multipart_recursion}.
For brevity, let $H=K(r_2+1,r_3,r_4,\cdots,r_\ell)$ and let $h=|H|$.
Consider $\rho>0$ such that $\rho^{1+r_2}\leq \frac{1}{2}\eta d^{r_2+h(h-1)/2}$
and $t_{K_h}(G')\geq\frac{1}{2}d^{h(h-1)/2}$ whenever $G'$ is a sufficiently large $(\rho,d)$-dense graph.
Such $\rho$ exists by Theorem~\ref{thm:folklore}.
For a homomorphism $\phi$ from $K(r_3,r_4,\cdots,r_\ell)$ to $G$,
define $C_\phi$ to be the set of vertices $v$ such that 
adding $v$ to $\phi(K(r_3,\cdots,r_\ell))$ extends $\phi$ as a homomorphism 
from $K(1,r_3,r_4,\cdots,r_\ell)$ to $G$.
Denote by $\Phi$ the set of homomorphisms $\phi\in\Hom(H,G)$ such that $|C_\phi|\geq\rho |G|$.
Then we have
\begin{align}\label{eq:Cphi}
|\Hom(K(1,r_2,r_3,\cdots,r_\ell),G)|\geq
\sum_{\phi\in\Phi}|\Hom(K_{1,r_2},G[C_\phi])|
\geq d^{r_2}\sum_{\phi\in\Phi}|C_\phi|^{r_2+1},
\end{align}
where the last inequality follows from the fact 
$t_{K_{1,r}}(J)\geq t_{K_2}(J)^{r}$ for any graph $J$.
Note now that
\begin{align*}
\sum_{\phi\in\Phi}|C_\phi|^{r_2+1}
&=|\Hom(H,G)|
-\sum_{\phi\notin\Phi}|C_\phi|^{r_2+1}
\\
&\geq |\Hom(H,G)|-\rho^{1+r_2}|G|^{1+r_2}|\Hom(K(r_3,\cdots,r_\ell),G)|\\
&
\geq |\Hom(H,G)|-\rho^{1+r_2}|G|^{h}.
\end{align*}
Substituting this into \eqref{eq:Cphi} gives
\begin{align*}
|\Hom(K(1,r_2,r_3,\cdots,r_\ell),G)|\geq d^{r_2}|\Hom(H,G)|-\rho^{1+r_2}|G|^{|H|}.
\end{align*}
Therefore, by normalising both sides by $|G|^{|H|}$,
\begin{align*}
 t_{K(1,r_2,r_3,\cdots,r_\ell)}(G)\geq d^{r_2}t_H(G)-\rho^{1+r_2}\geq (1-\eta)d^{r_2}t_H(G),
\end{align*}
where the last inequality follows from 
\begin{align*}
\rho^{1+r_2}\leq \frac{1}{2}\eta d^{r_2+h(h-1)/2}\leq \eta d^{r_2}t_{K_h}(G)\leq \eta d^{r_2}t_{H}(G).\tag*{\qedhere} 
\end{align*}
\end{proof}
An immediate corollary is that we may compare $t_{K(r_1,r_2,\cdots,r_\ell)}(G)$ and 
$t_{K(s_1,s_2,\cdots,s_\ell)}(G)$
by repeatedly applying Lemma \ref{lem:multi}
whenever $r_i\geq s_i\geq 0$ for $i=1,2,\cdots,\ell$.
\begin{corollary}\label{cor:KrKs}
Suppose $\delta>0$ and $\ell,r_1,\cdots,r_\ell$, 
and $s_1,s_2,\cdots,s_\ell$ are 
positive integers with $r_i\geq s_i$, $i=1,2,\cdots,\ell$.
Let $r=e(K(r_1,r_2,\cdots,r_\ell))$ and $s=e(K(s_1,s_2,\cdots,s_\ell))$.
Then
there exists a positive $\rho=\rho(\delta,d,r_1,\cdots,r_\ell,s_1,\cdots,s_\ell)$ such that
\begin{align*}
 t_{K(r_1,\cdots,r_\ell)}(G)\geq (1-\delta)d^{r-s} t_{K(s_1,\cdots,s_\ell)}(G),
\end{align*}
whenever $G$ is a sufficiently large $(\rho,d)$-dense graph.\hfill$\square$
\end{corollary}

We now turn to the proof of Theorem~\ref{thm:multipartite}.

\begin{proof}[Proof of Theorem \ref{thm:multipartite}].
Let $K=K(r_1,r_2,\cdots,r_\ell)$ for brevity 
and let $(\FF,\TT)$ be a $K$-decomposition of a graph $H$.
By Theorem \ref{thm:folklore} we know that $\Hom(H[X\cap Y],G)$ is non-empty.
Hence, Theorem \ref{thm:tree_hom} gives
\begin{align}
		t_H(G)\geq \frac{t_{K}(G)^{|\FF|}}{\prod_{XY\in E(\TT)} t_{H[X\cap Y]}(G)}=\frac{t_K(G)^{e(\TT)+1}}{\prod_{XY\in E(\TT)} t_{H[X\cap Y]}(G)},
\end{align}
as $e(\TT)=|\FF|-1$.
Using the bound
$t_{K}(G)/t_{H[X\cap Y]}(G)\geq (1-\delta)d^{e(K)-e_H[X\cap Y]}$
from Corollary \ref{cor:KrKs}, 
we obtain
\begin{align*}
t_H(G)&\geq t_{K}(G)\prod_{XY\in E(\TT)} (1-\delta)d^{e(K)-e_H[X\cap Y]}\\
&\geq (1-\delta)d^{e(K)}\prod_{XY\in E(\TT)} (1-\delta)d^{e(K)-e_H[X\cap Y]}.
\end{align*}
By Lemma~\ref{lem:edge_counts}, $e(H) =|\FF|e(K)-\sum_{XY\in E(\TT)}e_H[X\cap Y]$,
and thus,
\begin{align*}
t_H(G)\geq (1-\delta)^{|\FF|}d^{e(H) }.
\end{align*}
Taking $\delta=\eta/|\FF|$ concludes the proof.
\end{proof}

\section{Counting $C_{2r+1}$-decomposable graphs}\label{sec:oddcycles}

We shall prove the remaining half of Theorem~\ref{thm:treelike} at the end of this section.
\begin{theorem}\label{thm:cycle}
Conjecture \ref{conj:KNRS} is true if $H$ is a $C_{2k+1}$-decomposable graph.
\end{theorem}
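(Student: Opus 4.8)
The plan is to follow the information-theoretic method underlying Theorem~\ref{thm:multipartite}, using Reiher's theorem~\cite{Re14} in place of the count for complete multipartite graphs. Fix a $C_{2k+1}$-decomposition $(\FF,\TT)$ of $H$, root $\TT$ at an arbitrary bag, and enumerate $\FF=\{X_1,\dots,X_m\}$ so that each $X_i$ with $i\ge 2$ has its parent $X_{p(i)}$ among $X_1,\dots,X_{i-1}$; write $S_i=X_i\cap X_{p(i)}$, with the convention $S_1=\emptyset$. Given a large $(\rho,d)$-dense graph $G$ with $\rho$ small, the aim is to construct a random graph homomorphism $\mathbf{f}\colon V(H)\to V(G)$ — that is, a probability measure supported on $\Hom(H,G)$ — whose base-two Shannon entropy satisfies $\HH(\mathbf{f})\ge |V(H)|\log|V(G)|+|E(H)|\log d-\eta'$. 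This suffices: since $|\Hom(H,G)|\ge 2^{\HH(\mathbf{f})}$, and since we may assume $d^{|E(H)|}>\eta$ (otherwise \eqref{eq:KNRS} is trivial), which makes $d$ bounded below in terms of $\eta$ and $H$, we obtain $t_H(G)\ge d^{|E(H)|}-\eta$ once $|V(G)|$ is large and $\eta'$ is chosen appropriately. When $m=1$ this is exactly Reiher's theorem.

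We build $\mathbf{f}$ along $\TT$. Let $\mathcal{R}$ denote a suitable distribution on $\Hom(C_{2k+1},G)$; by the isomorphism hypothesis in the definition of a $J$-decomposition, the marginal of $\mathcal{R}$ on a separator $S_i$, regarded inside either $H[X_i]$ or $H[X_{p(i)}]$, is the same, so the following is well defined. Sample $\mathbf{f}|_{X_1}$ from $\mathcal{R}$, and for $i\ge 2$ sample $\mathbf{f}|_{X_i\setminus S_i}$ from the distribution of $\mathcal{R}$ conditioned on its $S_i$-marginal taking the already-determined value $\mathbf{f}|_{S_i}$. Since every edge of $H$ lies in a bag, $\mathbf{f}$ is supported on $\Hom(H,G)$, and since $\mathbf{f}|_{X_i\setminus S_i}$ is conditionally independent of the earlier coordinates given $\mathbf{f}|_{S_i}$, the chain rule yields
\begin{equation*}
\HH(\mathbf{f})=\HH(\mathcal{R})+\sum_{i=2}^{m}\bigl(\HH(\mathcal{R})-\HH(\mathcal{R}|_{S_i})\bigr).
\end{equation*}
A routine counting argument for tree decompositions gives $\sum_{i=1}^{m}\bigl((2k+1)-|S_i|\bigr)=|V(H)|$ and $\sum_{i=1}^{m}\bigl(|E(C_{2k+1})|-|E(C_{2k+1}[S_i])|\bigr)=|E(H)|$. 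Hence it is enough to exhibit, for $G$ as above and every proper subset $S\subsetneq V(C_{2k+1})$ arising as a separator, a distribution $\mathcal{R}$ on $\Hom(C_{2k+1},G)$ that is near-extremal, $\HH(\mathcal{R})\ge (2k+1)\log|V(G)|+(2k+1)\log d-o(1)$, and all of whose separator marginals are near-extremal for the corresponding induced subgraphs, $\HH(\mathcal{R}|_{T})\le |T|\log|V(G)|+|E(C_{2k+1}[T])|\log d+o(1)$ — note that $C_{2k+1}[T]$, being an induced subgraph of an odd cycle missing a vertex, is a disjoint union of paths.

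Producing such an $\mathcal{R}$ — a rooted strengthening of Reiher's theorem in which all relevant marginals are simultaneously near-extremal — is the main obstacle, and the step at which the argument departs from the proof of Theorem~\ref{thm:multipartite}: complete multipartite graphs are weakly norming and can be glued along their parts via clean reflection inequalities, whereas odd cycles are not weakly norming. The delicate point is that the trivial marginal bound $\HH(\mathcal{R}|_T)\le|T|\log|V(G)|+\log(2|E(G)|)$ is too weak precisely when $t_{K_2}(G)>d$; what is needed is a form of Reiher's theorem sensitive to the actual edge distribution of $G$, of the shape $t_{C_{2k+1}}(G)\ge t_{K_2}(G)\cdot d^{2k}-o(1)$ (which is what makes the reflection step $t_{C_{2k+1}}(G)^2/t_{K_2}(G)\ge d^{2k+1}-o(1)$ go through in the single-edge gluing), together with its conditional analogues for extending an embedding of a union of paths. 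Granting such a lemma, one takes $\mathcal{R}$ to be essentially the uniform measure on $\Hom(C_{2k+1},G)$, after a preliminary normalization of $G$, and propagates down $\TT$ the inductive invariant that every marginal met along the way is near-extremal for its induced union of paths, using the robustness of paths under this operation. The remaining tasks — extracting this conditional counting lemma from Reiher's theorem, controlling the accumulated error so as to choose $\rho=\rho(\eta,H)$, and the additive-versus-multiplicative bookkeeping — follow the same pattern as in the proof of Theorem~\ref{thm:multipartite} and I expect to be routine.
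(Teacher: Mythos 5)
Your entropy framework is sound, but it is not a different route: sampling $\mathbf{f}|_{X_i\setminus S_i}$ from the conditioned distribution along the rooted tree is precisely the Markov-tree construction of Theorem~\ref{thm:tree_entropy}, and your chain-rule identity for $\HH(\mathbf{f})$ is the content of Theorem~\ref{thm:tree_hom}. The problem is that you stop exactly at the crux. The statement you ``grant'' --- a Reiher-type bound sensitive to the actual path densities of $G$, together with ``conditional analogues for extending an embedding of a union of paths'' --- is the entire substance of the theorem beyond the formal gluing, and it is not routine. In the paper it is Lemma~\ref{lem:cyclepath}, $t_{C_{2k+1}}(G)\geq(d-\delta)\,t_{P_\ell}(G)^{2k/\ell}$ for every $\ell\leq 2k$, and its proof needs two genuinely nontrivial inputs: Reiher's Lemma~\ref{lem:Reiher} applied to the weight function $f_u(v)$ counting length-$k$ walks from $u$ to $v$, restricted to the vertices $u$ from which many such walks start (this gives the case $\ell=2k$, i.e.\ $t_{C_{2k+1}}(G)\geq(d-\delta)t_{P_{2k}}(G)$), and the unconditional log-convexity of path densities $t_{P_\ell}(G)\leq t_{P_{2k}}(G)^{\ell/2k}$ (Lemma~\ref{lem:paths}), proved via Cauchy--Schwarz and an absorbing-Markov-chain argument on the exponents, which is what handles separators that are disjoint unions of shorter paths. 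None of this appears in your proposal; declaring it routine is the gap.

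Moreover, the form in which you ask for the missing lemma is both stronger than necessary and of doubtful attainability. You require one distribution $\mathcal{R}$ on $\Hom(C_{2k+1},G)$ with $\HH(\mathcal{R})\geq(2k+1)(\log|V(G)|+\log d)-o(1)$ \emph{and} every separator marginal satisfying $\HH(\mathcal{R}|_{T})\leq|T|\log|V(G)|+|E(C_{2k+1}[T])|\log d+o(1)$; as you yourself observe, the uniform measure violates the marginal condition whenever $t_{K_2}(G)>d$, and it is not clear any measure meets both demands, since the $(\rho,d)$-denseness only bounds densities from below. The paper sidesteps conditional statements entirely: take $\mathcal{R}$ uniform on $\Hom(C_{2k+1},G)$, bound each marginal entropy by the trivial support bound $\log|\Hom(H[X\cap Y],G)|$, and then compare the marginals against $t_{C_{2k+1}}(G)$ itself rather than against $d$, via $t_{H[X\cap Y]}(G)\leq\bigl(t_{C_{2k+1}}(G)/(d-\epsilon)\bigr)^{e_{XY}/2k}\leq t_{C_{2k+1}}(G)^{e_{XY}/(2k+1)}$, the last step using Reiher's bound $t_{C_{2k+1}}(G)\geq(d-\epsilon)^{2k+1}$; the exponents then telescope to $t_H(G)\geq t_{C_{2k+1}}(G)^{|E(H)|/(2k+1)}\geq d^{|E(H)|}-\eta$. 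So: correct skeleton, identical in spirit to the paper's, but the key counting lemmas (Lemmas~\ref{lem:cyclepath} and~\ref{lem:paths}) are assumed rather than proved, and the substitute statement you propose is not the one a working proof should aim for.
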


In proving Theorem \ref{thm:cycle}, we will again follow the same proof strategy.
In order to apply Theorem~\ref{thm:tree_hom},
the key will be to prove appropriate graph homomorphism inequalities 
between odd cycles and the paths they contain.
\begin{lemma}\label{lem:cyclepath}
Given $d,\delta>0$ and positive integers $\ell$ and $r$ with $\ell\leq 2r$,
there exists $\rho=\rho(\delta,d,r,\ell)$ such that
\begin{align}\label{eq:cyclepath}
	t_{C_{2r+1}}(G)\geq (1-\delta)d \cdot t_{P_\ell}(G)^{2r/\ell}
\end{align}
whenever $G$ is sufficiently large and $(\rho,d)$-dense.
\end{lemma}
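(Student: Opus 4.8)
The plan is to split the statement into a purely combinatorial ``path versus path'' reduction, which accounts for the (sharp) fractional exponent, and a ``cycle versus $P_{2r}$'' comparison, which is where Reiher's theorem must enter. Throughout write $A$ for the adjacency matrix of $G$, $n=|V(G)|$, and $\mathbf 1$ for the all-ones vector, so that $t_{P_m}(G)=\mathbf 1^\top A^m\mathbf 1/n^{m+1}$.

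\textbf{Step 1 (reduction to $\ell=2r$).} I would first prove that $t_{P_{2r}}(G)\ge t_{P_\ell}(G)^{2r/\ell}$ holds for \emph{every} graph $G$ and every $\ell\le 2r$; together with this, it suffices to treat $\ell=2r$, i.e.\ to show $t_{C_{2r+1}}(G)\ge(d-\delta)t_{P_{2r}}(G)$ for $(\rho,d)$-dense $G$. For even $\ell=2s$, use $\mathbf 1^\top A^{2m}\mathbf 1=\lVert A^m\mathbf 1\rVert^2=n\,\mathbb E_\mu[(\lambda^2)^m]$, where $\mu$ gives each eigenvalue $\lambda_k$ of $A$ the mass $(\mathbf 1^\top u_k)^2/n$; then $t_{P_{2m}}(G)^{1/m}=n^{-2}\lVert\lambda^2\rVert_{L^m(\mu)}$ is non-decreasing in $m$ because $L^m(\mu)$-norms are, which yields $t_{P_{2r}}(G)^{s}\ge t_{P_{2s}}(G)^{r}$ for $s\le r$. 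For odd $\ell$, Cauchy--Schwarz applied to $\mathbf 1^\top A^\ell\mathbf 1=\langle A^{(\ell-1)/2}\mathbf 1,\,A^{(\ell+1)/2}\mathbf 1\rangle$ gives $t_{P_\ell}(G)^2\le t_{P_{\ell-1}}(G)\,t_{P_{\ell+1}}(G)$, and combining this with the even case for $\ell-1$ and $\ell+1$ finishes. (This is the Blakley--Roy / ``Sidorenko for paths'' circle of ideas, and the exponent is sharp on quasirandom $G$.)

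\textbf{Step 2 (cycle versus $P_{2r}$).} Cut $C_{2r+1}$ at the unique vertex at distance $r$ from both endpoints of a fixed edge. If $g_w\colon V(G)\to\mathbb Z_{\ge 0}$ counts walks of length $r$ in $G$ starting at $w$, this yields
\[
 |\Hom(C_{2r+1},G)|=\sum_{w}\sum_{u,v}g_w(u)g_w(v)A_{uv},
 \qquad
 |\Hom(P_{2r},G)|=\sum_{w}\Big(\sum_{v}g_w(v)\Big)^{2}.
\]
The vertices $w$ with $\sum_v g_w(v)<\rho' n^{r}$ carry only an $O(\rho'/d^{2r})$-fraction of the total weight $\sum_w(\sum_v g_w(v))^2$ (using Blakley--Roy, $t_{P_{2r}}(G)\ge d^{2r}$), so they may be discarded. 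It then suffices to establish the weighted density estimate $\sum_{u,v}g_w(u)g_w(v)A_{uv}\ge(d-\delta)\big(\sum_v g_w(v)\big)^{2}$ for the remaining $w$, up to an error negligible after summing over $w$. The ``spread-out'' part of $g_w$ is handled by $(\rho,d)$-density (and its inheritance by induced subgraphs on sets of size at least $\rho n$, for $\rho$ small enough), via a level-set decomposition of $g_w$; the part of $g_w$ concentrated on few, possibly high-degree, vertices is where one invokes Reiher's theorem on odd cycles, in a robust weighted form, to certify that even this part contributes the required number of homomorphic copies of $C_{2r+1}$ relative to the $P_{2r}$-count.

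\textbf{The main obstacle.} Step 2 is the crux, and its difficulty is precisely that the naive local-density bounds degrade on the concentrated part of $g_w$: cross-edge-counts between very unequal level sets, or a single heavy spike in $g_w$, are not controlled by $(\rho,d)$-density alone. The real work is to extract from Reiher's odd-cycle theorem a sufficiently robust, weighted statement that tolerates a weight $g_w$ far from uniform, and to track every error term through the level-set decomposition uniformly as $\rho\to 0$. Setting up this weighted version of Reiher's theorem, and doing the attendant bookkeeping, is the step I expect to require the most care.
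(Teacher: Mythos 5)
Your overall architecture is the same as the paper's: reduce to $\ell=2r$ by a path-moment inequality, then compare $C_{2r+1}$ with $P_{2r}$ by cutting the cycle at the vertex opposite a fixed edge and analysing the walk-count weights $g_w$. Step 1 is correct, and your spectral proof of $t_{P_\ell}(G)\le t_{P_{2r}}(G)^{\ell/2r}$ (monotonicity of $L^m(\mu)$-norms for the spectral measure, plus one Cauchy--Schwarz step for odd $\ell$) is a clean alternative to the paper's Lemma \ref{lem:paths}, which obtains the same inequality from log-convexity of $t_{P_{2k}}(G)$ via an absorbing Markov chain.

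The gap is in Step 2, and you have located it yourself: the weighted estimate $\sum_{u,v}g_w(u)g_w(v)A_{uv}\ge(d-\delta)\bigl(\sum_v g_w(v)\bigr)^2$ (up to a negligible additive error) is asserted but not proved, and the plan you sketch for it --- a level-set decomposition of $g_w$ together with a ``robust weighted form'' of Reiher's odd-cycle \emph{counting} theorem --- is both vague and aimed at the wrong statement. What is actually needed is Reiher's Lemma 2.1 of \cite{Re14}, quoted in the paper as Lemma \ref{lem:Reiher}: if $G$ is $(\rho,d)$-dense on $n$ vertices and $f\colon V(G)\to[0,1]$ satisfies $\sum_v f(v)\ge\rho n$, then $\sum_{uv\in E(G)}f(u)f(v)\ge\frac d2\bigl(\sum_v f(v)\bigr)^2-n$. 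Your weight becomes $[0,1]$-valued after dividing by $n^{r-1}$ (there are at most $n^{r-1}$ walks of length $r$ between two fixed vertices), so for every retained $w$ the lemma applies directly and gives $\sum_{u,v}g_w(u)g_w(v)A_{uv}\ge d\bigl(\sum_v g_w(v)\bigr)^2-2n^{2r-1}$; summing over at most $n$ choices of $w$ produces an error $O(n^{2r})$, which is lower order against the $n^{2r+1}$ normalisation. In particular the phenomena you flag as the main obstacle (spikes of $g_w$, cross-terms between very unequal level sets) are absorbed wholesale by the additive $-n$ in the lemma, and no decomposition or weighted odd-cycle theorem is required. So, as written, the proposal is incomplete precisely at its self-declared crux; it turns into the paper's proof once Lemma \ref{lem:Reiher} is invoked (that lemma is itself a nontrivial ingredient of \cite{Re14}, which the paper uses as a black box).
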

There are two main ingredients in the proof of Lemma~\ref{lem:cyclepath}.
The first is Reiher's lemma, Lemma~\ref{lem:Reiher}, and the second is the log-convexity of path homomorphisms.
In fact, after obtaining the statement independently, we found that the log-convexity of paths has already been obtained by Erd\H{o}s and Simonovits~\cite{ES82}. We include a simple proof in the appendix for the sake of completeness.

\begin{lemma}\label{lem:paths}
For any graph $G$ and positive integers $\ell\leq 2r$, the following inequality holds: 
\begin{align}\label{eq:paths}
  t_{P_{2r}}(G)\geq t_{P_\ell}(G)^{2r/\ell}.
\end{align}
\end{lemma}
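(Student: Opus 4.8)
The plan is to prove the inequality $t_{P_\ell}(G)\leq t_{P_{2r}}(G)^{\ell/2r}$ by iterating a single ``path-doubling'' step, together with the fact that $P_1 = K_2$ and a monotonicity observation. The natural quantity to track is the homomorphism-counting interpretation: $t_{P_m}(G)$ is the expected value of $\langle A^m \mathbf{1}, \mathbf{1}\rangle / n^{m+1}$ where $A$ is the adjacency matrix, or, more usefully here, $t_{P_m}(G) = n^{-(m+1)}\sum_{u,v} (A^m)_{uv}$. The cleanest route is via the observation that for the normalized walk matrix $W = A/n$, we have $t_{P_m}(G) = \mathbf{1}^\top W^m \mathbf{1} / n$ (with $\mathbf{1}$ the all-ones vector), and that $W$ is symmetric positive-semidefinite-adjacent enough that Cauchy--Schwarz applies to its powers. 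Concretely, first I would establish the sub-multiplicativity/log-convexity statement
\begin{align*}
t_{P_{a+b}}(G)\cdot t_{P_0}(G) \geq \text{(something)},\quad\text{equivalently}\quad t_{P_m}(G)^2 \leq t_{P_{m-1}}(G)\, t_{P_{m+1}}(G)
\end{align*}
—wait, that is the wrong direction. The correct inequality to prove is the reverse: $t_{P_m}(G)^2 \geq t_{P_{m-1}}(G)\,t_{P_{m+1}}(G)$ fails in general, so instead I should work with the \emph{product} form directly.

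Here is the approach I would actually carry out. Write $x_v = \deg(v)/n$ so that $\sum_v x_v / n = t_{K_2}(G) = t_{P_1}(G)$. More generally, let $p^{(m)}_v = (W^m \mathbf 1)_v$ be the (normalized) number of homomorphic paths of length $m$ starting at $v$, so $t_{P_m}(G) = \frac{1}{n}\sum_v p^{(m)}_v$. The key step is the Cauchy--Schwarz bound
\begin{align*}
\sum_{uv \in E(G)} p^{(a)}_u p^{(a)}_v \;\text{-type manipulation},
\end{align*}
which for the symmetric matrix $W$ gives $\mathbf 1^\top W^{2a}\mathbf 1 = \|W^a \mathbf 1\|^2$ and hence $\big(\mathbf 1^\top W^\ell \mathbf 1\big)^{?}$ can be controlled by $\big(\mathbf 1^\top W^{2r}\mathbf 1\big)$. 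Precisely, I would use: for a symmetric PSD-type operator, $\|W^a \mathbf 1\|$ is log-convex in $a$, i.e. $\langle \mathbf 1, W^{a+c}\mathbf 1\rangle^2 \le \langle \mathbf 1, W^{2a}\mathbf 1\rangle \langle\mathbf 1, W^{2c}\mathbf 1\rangle$. Since $W$ need not be PSD, the right object is $W$ on the subspace where it behaves well; but in fact $\langle \mathbf 1, W^m \mathbf 1\rangle = \langle W^{\lceil m/2\rceil}\mathbf 1, W^{\lfloor m/2\rfloor}\mathbf 1\rangle$ and Cauchy--Schwarz gives $t_{P_m}(G) \le \sqrt{t_{P_{2\lceil m/2\rceil - ?}}\cdots}$. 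The clean statement I want is: $m \mapsto \log\big(n\cdot t_{P_{2m}}(G)\big) = \log\|W^m\mathbf 1\|^2$ is convex in $m$ (this is immediate: $\|W^m\mathbf 1\|^2 = \langle W^{m-1}\mathbf 1, W^{m+1}\mathbf 1\rangle \le \|W^{m-1}\mathbf 1\|\,\|W^{m+1}\mathbf 1\|$). Convexity plus the value at $m=0$ (namely $\|\mathbf 1\|^2 = n$, so $n\cdot t_{P_0}(G)=n$) then yields, for $0 \le m \le r$,
\begin{align*}
n\cdot t_{P_{2m}}(G) \le \big(n\cdot t_{P_{2r}}(G)\big)^{m/r}\cdot n^{1-m/r},
\end{align*}
i.e. $t_{P_{2m}}(G) \le t_{P_{2r}}(G)^{m/r}$, which is \eqref{eq:paths} for even $\ell = 2m$. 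For odd $\ell$, I would additionally invoke $t_{P_{2m+1}}(G) \le \sqrt{t_{P_{2m}}(G)\, t_{P_{2m+2}}(G)}$ (same Cauchy--Schwarz, $\langle \mathbf 1, W^{2m+1}\mathbf 1\rangle = \langle W^m\mathbf 1, W^{m+1}\mathbf 1\rangle$, plus here one must be slightly careful that $W^{m+1}\mathbf1$ has nonnegative entries so the inequality with the $\mathbf1$ test vector is legitimate — it is, since all entries of all $W^j\mathbf1$ are nonnegative), reducing to the even case at the cost of interpolating the exponent; chaining these gives the general bound $t_{P_\ell}(G) \le t_{P_{2r}}(G)^{\ell/2r}$.

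The main obstacle I anticipate is handling the parity/odd cases cleanly and making sure the Cauchy--Schwarz steps are valid despite $W$ not being positive semidefinite — the saving grace is that the test vector $\mathbf 1$ and all vectors $W^j\mathbf 1$ have nonnegative coordinates, so every inner product appearing is of the form $\langle W^a \mathbf 1, W^b\mathbf 1\rangle$ with $a+b$ the path length and both factors entrywise nonnegative, which is exactly what is needed. A secondary subtlety is that the bound should hold for \emph{all} graphs $G$ with no density hypothesis, so one must not accidentally use $(\rho,d)$-density; the argument above uses only convexity of $m \mapsto \log\|W^m\mathbf1\|^2$ and the trivial normalization $\|\mathbf1\|^2 = n$, so this is fine. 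I would present it by first stating the convexity lemma $\|W^m\mathbf1\|^2$ is log-convex, then deriving the even case, then the odd case by one more Cauchy--Schwarz interpolation.
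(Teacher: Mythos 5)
Your argument is correct, and its core --- Cauchy--Schwarz giving log-convexity of the even path densities $t_{P_{2k}}(G)$, together with one further Cauchy--Schwarz step $t_{P_{2k+1}}(G)\le t_{P_{2k}}(G)^{1/2}t_{P_{2k+2}}(G)^{1/2}$ to reduce odd lengths to even ones --- is the same as in the paper's proof. Where you genuinely diverge is the endgame. The paper works only with the indices $k=1,\dots,r$, iterates the log-convexity via a Markov-chain (absorbing-state) argument to reach $t_{P_{2\ell}}(G)\le t_{P_2}(G)^{\frac{r-\ell}{r-1}}t_{P_{2r}}(G)^{\frac{\ell-1}{r-1}}$, and then needs the additional inequality $t_{P_2}(G)^r\le t_{P_{2r}}(G)$, supplied by Theorem \ref{thm:tree_hom}, to absorb the $t_{P_2}$ factor. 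You instead extend the log-convex sequence down to the index $0$, where $\|W^0\mathbf{1}\|^2=n$, i.e.\ $t_{P_0}(G)=1$; discrete midpoint convexity of $m\mapsto\log\|W^m\mathbf{1}\|^2$ then places the value at $m$ below the chord from $0$ to $r$, giving $t_{P_{2m}}(G)\le t_{P_{2r}}(G)^{m/r}$ in one step. This buys two simplifications: no convergence computation for the exponent vector, and no appeal to Theorem \ref{thm:tree_hom} or any Blakley--Roy-type lower bound for even paths --- the whole lemma becomes pure Cauchy--Schwarz plus the normalisation $\|\mathbf{1}\|^2=n$. Two small remarks: the worry about nonnegativity of the entries of $W^j\mathbf{1}$ is unnecessary, since Cauchy--Schwarz requires no sign condition (and if $G$ has no edges both sides of \eqref{eq:paths} vanish, so all logarithms may be assumed defined); and the exploratory first paragraph, including the withdrawn claim about $t_{P_m}(G)^2$ versus $t_{P_{m-1}}(G)t_{P_{m+1}}(G)$, should be cut, since the final argument only ever uses log-convexity of the even-indexed sequence.
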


By Lemma \ref{lem:paths}, 
if we prove that there exists $\rho>0$ such that
\begin{align*}
t_{C_{2r+1}(G)}\geq (1-\delta)d\cdot t_{P_{2r}}(G)
\end{align*}
whenever $G$ is a sufficiently large $(\rho,d)$-dense graph,
then Lemma~\ref{lem:cyclepath} follows.
The proof of this inequality closely resembles that of Reiher \cite{Re14} proving Conjecture \ref{conj:KNRS} for odd cycles,
but, despite the similarity, the conclusion of  Lemma~\ref{lem:cyclepath} is slightly tighter than the standard application of Corollary~\ref{cor:Reiher}.
Hence, our proof only relies on Lemma~\ref{lem:Reiher}.
\begin{proof}[Proof of Lemma \ref{lem:cyclepath}] 
Let $|G|=n$ and let $q(v)$ be the normalised number of walks of length $r$ 
starting from $v$, i.e., we divide the number of walks by $n^{r-1}$.
Denote by $U:=\{u: q(u)> \rho n\}$ the set of vertices with large $q(u)$. Then
\begin{align*}
\frac{1}{n^{2r-2}}|\Hom(P_{2r},G)|
=\sum_{u\in U} q(u)^2 + \sum_{u\notin U} q(u)^2,
\end{align*}
and hence, 
\begin{align*}
\sum_{u\in U} q(u)^2 \geq |\Hom(P_{2r},G)|/n^{2r-2} - \rho^2 n^3.
\end{align*}
On the other hand, let $f_u(v)$ be the normalised number of walks of length $r$ from $u$ to $v$. 
Then by definition $q(u)=\sum_{v\in V(G)} f_u(v)$,
and 
$\sum_{v\in V(G)} f_u(v)> \rho n$ whenever $u$ is in $U$. 
For each $u \in U$, Lemma \ref{lem:Reiher} gives 
\begin{align*}
2\sum_{vw\in E(G)} f_u(v)f_u(w)
\geq d
\left(\sum_{v\in V(G)}f_u(v)\right)^2
-2n
\geq dq(u)^2 -2n.
\end{align*}
Summing this inequality over all $u\in U$ is 
at most the normalised number of homomorphisms
$|\Hom(C_{2r+1},G)|/n^{2r-2}$.
Hence, we have
\begin{align*}
\frac{1}{n^{2r-2}}|\Hom(C_{2r+1},G)|
\geq \frac{d}{n^{2r-2}}|\Hom(P_{2r},G)|-\rho^2 n^3-2n^2.
\end{align*}
Dividing both sides by $n^3$ gives
\begin{align*}
t_{C_{2r+1}}(G)\geq d\cdot t_{P_{2r}}(G)-\rho^2 -2/n.
\end{align*}
By taking $\rho=\sqrt{\delta d^{2r+1}}/4$ and $n>4/\delta d^{2r+1}$,
the inequality $t_{P_{2r}}(G)\ge d^{2r}$ finishes the proof.
\end{proof}

\medskip

\begin{proof}[Proof of Theorem \ref{thm:cycle}]
Let $(\FF,\TT)$ be a $C_{2r+1}$-decomposition of $H$ and set $\epsilon=\eta/|\FF|$.
By Reiher's theorem \cite{Re14} on odd cycles,\footnote{
Obviously, it also follows from Lemma \ref{lem:cyclepath} for the case $\ell=2r$,
which rephrases Reiher's argument.}
there exists $\rho=\rho(\delta,d,\epsilon)>0$ such that
\begin{align}\label{eq:Reiher}
 t_{C_{2r+1}}(G)\geq (1-\epsilon)d^{2r+1}
\end{align}
whenever $G$ is a sufficiently large $(\rho,d)$-dense graph.
Let $e_{XY}$ be the number of edges in $H[X\cap Y]$ for $XY\in E(\TT)$.
Each $H[X\cap Y]$, $XY\in E(\TT)$, is a vertex-disjoint union of paths,
and thus by Lemma~\ref{lem:paths} we obtain the upper bound
\begin{align*}
t_{H[X\cap Y]}(G)\leq t_{P_{2r}}(G)^{e_H[X\cap Y]/2r}.
\end{align*}
Combining this bound with Theorem \ref{thm:tree_hom}, we obtain
\begin{align*}
t_H(G)\geq \frac{t_{C_{2r+1}}(G)^{|\FF|}}{\prod_{XY\in E(\TT)} t_{H[X\cap Y]}(G)}
\geq 
\frac{t_{C_{2r+1}}(G)^{|\FF|}}{t_{P_{2r}}(G)^{\frac{1}{2r}\sum_{XY\in E(\TT)}e_H[X\cap Y]}}.
\end{align*}
The bound $t_{C_{2r+1}}(G)\geq (1-\epsilon)d\cdot t_{P_{2r}}(G)$ from Lemma~\ref{lem:cyclepath} now gives
\begin{align}\label{eq:compute}
t_H(G)\geq (1-\epsilon)^{|\FF|}d^{|\FF|}t_{P_{2r}}(G)^{|\FF|-\frac{1}{2r}\sum_{XY\in E(\TT)}e_H[X\cap Y]}.
\end{align}
By Lemma~\ref{lem:edge_counts}, 
\begin{align*}
e(H) =(2r+1)|\FF|-\sum_{XY\in E(\TT)}e_{XY}=|\FF|+2r\left(|\FF|-\frac{1}{2r}\sum_{XY\in E(\TT)}e_H[X\cap Y]\right),
\end{align*}
and thus, \eqref{eq:compute} yields
\begin{align*}
t_H(G)&\geq (1-|\FF|\epsilon)d^{|\FF|}t_{P_{2r}}(G)^{|\FF|-\frac{1}{2r}\sum_{XY\in E(\TT)}e_H[X\cap Y]}\\
&\geq (1-\eta)d^{|\FF|}t_{P_{2r}}(G)^{\frac{1}{2r}(e(H) -|\FF|)}
\geq (1-\eta)d^{e(H) }.\tag*{\qedhere} 
\end{align*}
\end{proof}

\section{Concluding remarks}
\textbf{On Li--Szegedy smoothness.}
The smoothness condition~\eqref{eq:smooth} and~\eqref{eq:large_entropy} can also be interpreted in terms of entropy. It is more convenient to use the term \emph{relative entropy} to the uniform vertex sampling, as in~\cite{Sz15}.
Namely, \eqref{eq:smooth} means that the following randomised algorithm guarantees `large' relative entropy: sampling $\phi$ with the distribution $p_\phi$ and sample a homomorphism in $\Hom(H,G)$ that extends $\phi$ uniformly at random. Indeed,~\eqref{eq:large_entropy} just means that $p_\phi$ itself has large enough entropy. We however used the language of logarithmic concavity instead of entropy, because it clarifies our key idea to control the rational exponents of $t_H(G;x,y)$.

If one takes the information-theoretic approach used in~\cite{CKLL15,KLL14,LSz12,Sz15} to prove Sidorenko's property of a graph $H$, it is often easy to find an induced subgraph $J$ that is smooth in $H$. For instance, if $H$ is a strongly tree-decomposable graph defined in~\cite{CKLL15}, every sub-decomposition defined in~\cite{CKLL18} induces a smooth subgraph in $H$. 
Moreover, in the proof of Theorem~\ref{thm:chord}, no special properties except the Sidorenko property of even cycles have been used. Thus, it is certainly possible to replace the even cycles by other graphs having Sidorenko's property.

It is hence possible to obtain more instances of Conjecture~\ref{conj:KNRS} using this method, once the smooth subgraph is `symmetric' enough to apply Lemma~\ref{lem:Reiher}. However, this still seems far from solving the full conjecture and we do not pursue this further.

\medskip

\noindent
\textbf{Smallest open case for Conjecture \ref{conj:KNRS}.} 
Using all the results so far and the theorem of Reiher \cite{Re14}
on odd cycles, one may check that Conjecture \ref{conj:KNRS} is true for $H$ with at most five vertices. However, we do not know how to handle the following special case for 6 vertices and leave it as an open problem.
\begin{question}
Let $H$ be the graph with 6 vertices and 8 edges,
where $V(H)=\mathbb{Z}_6$ and $E(H)=\{\{i,i+1\}:1\leq i\leq 6\}\cup\{\{1,5\},\{2,4\}\}$.
Does $H$ satisfy Conjecture~\ref{conj:KNRS}?
\end{question}

\medskip

\noindent
\textbf{Extending Theorem~\ref{thm:treelike} to chordal graphs.}
When using Theorem \ref{thm:tree_entropy},
the major caveat is 
how to find random variables $(X_{i,F})_{i\in F}$ that 
agree on the marginals $(X_{j,A})_{j\in A\cap B}$.
In fact, the symmetry condition in the definition of the $J$-decomposition 
is tailored to satisfy the marginal constraints.
However, for non-isomorphic graphs $H_1$ and $H_2$ containing the same induced subgraph $J$,
it is often hard to find distributions on $\Hom(H_1,G)$ and $\Hom(H_2,G)$
that agree on the natural projection to $\Hom(J,G)$.

For example, it is possible to generate a random copy of a tree in such a way that
the projection of the distribution onto a subtree agrees with the distribution 
generated by the same algorithm,
which Theorem~\ref{thm:subtree_smoothness} essentially implies.
This also leads to the definition of strongly tree-decomposable graphs used in \cite{CKLL15}.
Another example is Theorem~\ref{thm:edge_smooth},
whose consequence is that,
if $H$ has Sidorenko's property,
we may assume that 
the projection of a uniform random homomorphic copy of $H$ onto a single edge 
is again uniform.
Therefore, it is possible to glue graphs having Sidorenko's property on a single edge
while preserving the property.
Likewise, 
if it is possible to generate random copies of $K_r$ and $K_s$ 
that have the same marginals on $K_t$ for
every $t\leq \min(r,s)$
in locally dense graphs,
then it may be possible to prove Conjecture \ref{conj:KNRS} for all chordal graphs. We hence raise the following question:
\begin{question}
Do all chordal graphs satisfy Conjecture~\ref{conj:KNRS}?
\end{question}

\vspace{5mm}

\textbf{Acknowledgement.} I would like to thank David Conlon for bringing 
Conjecture \ref{conj:KNRS} to my attention and
for many helpful discussions. I would also like to thank Oliver Riordan, Mathias Schacht, and Sasha Sidorenko, who carefully read various versions of this paper and gave useful comments.

\bibliographystyle{abbrv}
\bibliography{references}

\appendix
\section{Proofs of auxiliary lemmas}
\begin{proof}[Proof of Lemma~\ref{lem:edge_counts}]
Let $uv\in E(H)$ and let $\FF(uv)$ be the collection of vertex sets $X\in \FF$ containing both $u$ and $v$.
Since $\FF(uv)=\FF(u)\cap\FF(v)$ and $\FF(uv)\neq\emptyset$ by (ii) of the definition of tree decompositions, 
$\FF(uv)$ induces a non-empty subtree $\TT_{uv}$ of $\TT$.
In \eqref{eq:edge_counts}, each $uv$ is counted $|\FF(uv)|$ times in $\sum_{X\in\FF}e_H[X]$
and $e(\TT_{uv})=|\FF(uv)|-1$ times in $\sum_{XY\in E(\TT)}e_H[X\cap Y]$.
Thus, every edge is counted once on both sides of \eqref{eq:edge_counts}, which proves the desired identity.
\end{proof}

\begin{proof}[Proof of Lemma~\ref{lem:kolmogorov}]
Suppose $X_1,X_2$, and $X_3$ take values in
finite sets $A,B,$ and $C$, respectively.
Note that $X_2'$ also takes values in $B$.
Let $Y_1,Y_2,$ and $Y_3$ be random variables defined by the joint distribution
\begin{align}\label{eq:conditional}
	\PP [Y_1=a, Y_2=b, Y_3=c]
	=\frac{\PP[X_1=a,X_2=b]~\PP[X_2'=b,X_3=c]}
	{\PP[X_2=b]}
\end{align}
for all possible values of $a,b$, and $c$.
This is well-defined, since $\PP[X_1=a,X_2=b]=0$ whenever $\PP[X_2=b]=0$.
We claim that $(Y_1,Y_2,Y_3)$ is a random vector with desired properties.
Summing \eqref{eq:conditional} over all $a\in A$ gives
\begin{align}\label{eq:cond_bc}
	\PP[ Y_2=b, Y_3=c]=\PP[ X_2'=b,X_3=c]
\end{align}
and the fact that $\PP[X_2=b]=\PP[X_2'=b]$ gives that by symmetry
\begin{align*}
	\PP[ Y_1=a,Y_2=b]=\PP[X_1=a, X_2=b].
\end{align*}
Moreover, by substituting \eqref{eq:cond_bc} into
\eqref{eq:conditional}, we obtain
\begin{align*}
	\PP [Y_1=a| Y_2=b, Y_3=c]&=\frac{\PP [Y_1=a, Y_2=b, Y_3=c]}{\PP[Y_2=b, Y_3=c]}\\
	&= \frac{\PP [X_1=a, X_2=b]}{\PP[X_2=b]}= \PP[Y_1=a|Y_2=b],
\end{align*}
which implies the conditional independence
of $Y_1$ and $Y_3$ given $Y_2$.
\end{proof}

\begin{proof}[Proof of Theorem \ref{thm:tree_entropy}]
	We use induction on $|\FF|$. The base case $|\FF|=1$ is trivially true.
	Fix a leaf $L$ of $\TT$ and let $\TT'$ be the tree $\TT\setminus L$ on $\FF':=\FF\setminus\{L\}$.
	By rearranging indices, we may assume that
	$L=\{t,t+1,\cdots,k\}$ for some $t\leq k$
	and that 
	$\FF'$ satisfies $\cup_{F\in\FF'}F=[\ell]$ for some $t\leq \ell\leq k$.
	Let $P$ be the neighbour of $L$ in $\TT$.
	Then $\{t,t+1,\cdots,\ell\}=L\cap P$. 
	By the inductive hypothesis, there is $\Y=(Y_1,Y_2,\cdots,Y_\ell)$ such that 
	$\Y_F:=(Y_i)_{i\in F}$ and $\X_F:=(X_{i;F})_{i\in F}$ are identically distributed
	for each $F\in\FF'$ and moreover,
	\begin{align}\label{eq:induction}
		\HH(\Y)
		=\sum_{F\in\FF'}\HH(\X_F)
		-\sum_{AB\in E(\TT')}\HH((X_{i;A})_{i\in A\cap B}).
	\end{align}
	Since $\Y_P$ and $\X_P$ are identically distributed and $\{t,t+1,\cdots,\ell\}\subseteq P$,
	 $(Y_t,Y_{t+1},\cdots,Y_\ell)$ and $(X_{t;L},X_{t+1;L},\cdots,X_{\ell;L})$ are identically distributed.
	 Thus, we may apply Lemma \ref{lem:kolmogorov} with 
	\begin{align*}
	& X_1=(Y_1,Y_2,\cdots,Y_{t-1}),~~ X_2=(Y_t,Y_{t+1},\cdots,Y_\ell),\\
	& X_2'=(X_{t;L},X_{t+1;L},\cdots,X_{\ell;L}), \text{ and } X_3=(X_{\ell+1;L},X_{\ell+2;L},\cdots,X_{k;L}).
	\end{align*}
	Then there exists
	$(Z_1,Z_2,\cdots,Z_k)$ such that
	$(Z_1,Z_2,\cdots,Z_{t-1})$ and $(Z_{\ell+1},\cdots,Z_{k})$ are conditionally independent given
	$(Z_t,Z_{t+1},\cdots,Z_{\ell})$,
	$(Z_1,\cdots,Z_\ell)$ and $\Y$ are identically distributed,
	and $(Z_t,Z_{t+1},\cdots,Z_k)$ and $\X_{L}$ are identically distributed.
	By conditional independence, we obtain
	\begin{align*}
		\HH(Z_1,Z_2,\cdots,Z_k)
		=\HH(\Y)+\HH(\X_{L})
		-\HH(Y_t,Y_{t+1},\cdots,Y_{\ell}).
	\end{align*}
	Now \eqref{eq:induction} and the fact that $\{t,t+1,\cdots,\ell\}=L\cap P$ 
	implies \eqref{eq:tree_entropy}.
\end{proof}

\begin{proof}[Proof of Lemma~\ref{lem:paths}]
We shall repeatedly use the inequality
\begin{align*}
	t_{P_{k+t}}(G)\leq t_{P_{2k}}(G)^{1/2}t_{P_{2t}}(G)^{1/2}
\end{align*}
that easily follows from the Cauchy--Schwarz inequality.
If \eqref{eq:paths} holds for all paths of even length, 
then by $t_{P_{2k+1}}(G)\leq t_{P_{2k}}(G)^{1/2}t_{P_{2k+2}}(G)^{1/2}$ we are done.
Thus, we may assume that our path is of even length $2\ell$.
We claim that the sequence $t_{P_{2k}}(G)$, $k=1,2,\cdots,r$, is log-convex.
Observe that the Cauchy--Schwarz inequality gives log-convexity for adjacent terms, i.e.,
\begin{align*}
	t_{P_{2k}}(G)\leq t_{P_{2(k+1)}}(G)^{1/2}t_{P_{2(k-1)}}(G)^{1/2}.
\end{align*}
We need the following folklore fact for convexity.
\begin{lemma}\label{lem:convex}
Let $a_0,a_1,\cdots,a_r$ be a real sequence such that, for every $i=1,2,\cdots,r-1$,
\begin{align*}
a_i\leq\frac{a_{i-1}+a_{i+1}}{2}.
\end{align*}
Then $a_k\leq\frac{1}{r}(ka_r+(r-k)a_0)$.
\end{lemma}

\begin{proof}
Note that $b_i:=a_{i}-a_{i-1}$ is increasing.
As $a_k-a_0=b_k+b_{k-1}+\cdots+b_1$,
\begin{align*}
r(a_k-a_0)&=k(b_1+\cdots +b_k)+(r-k)(b_1+\cdots +b_k)\\
&\leq k(b_1+\cdots +b_k)+k(b_{k+1}+b_{k+2}+\cdots+b_r)=k(a_r-a_0),
\end{align*}
which completes the proof.
\end{proof}
Now letting $a_k:=\log t_{P_{2k}}(G)$ in Lemma~\ref{lem:convex} gives $a_k\leq\frac{1}{r}(ka_r+(r-k)a_0)$.
Therefore, 
\begin{align*}
 t_{P_{2\ell}}(G)\leq t_{P_{0}}(G)^{\frac{r-\ell}{r}}
 t_{P_{2r}}(G)^{\frac{\ell}{r}}.
\end{align*}
Since $P_0$ is the single vertex graph, this completes the proof. 
\end{proof}

\end{document}